
\documentclass[12pt,a4paper]{article}
\usepackage[english]{babel}
\usepackage{fancyhdr}
\usepackage[latin1]{inputenc}
\usepackage{amsmath}
\usepackage{amsfonts}
\usepackage{amssymb}
\usepackage{graphicx}
\usepackage{latexsym}
\usepackage{amsthm}
\usepackage{color}
\usepackage{midpage}
\usepackage[width=15.00cm]{geometry}
\usepackage{amscd}
\usepackage{tikz-cd}
\usepackage{hyperref}

\setcounter{MaxMatrixCols}{10}

\numberwithin{equation}{section}
\theoremstyle{plain}
\newtheorem{theorem}{Theorem}[section]
\theoremstyle{corollary}
\newtheorem{corollary}[theorem]{Corollary}
\theoremstyle{lemma}
\newtheorem{lemma}[theorem]{Lemma}
\theoremstyle{proprosition}
\newtheorem{proposition}[theorem]{Proposition}
\theoremstyle{assumption}
\newtheorem{assumption}[theorem]{Assumption}
\theoremstyle{condition}

\theoremstyle{definition}
\newtheorem{definition}[theorem]{Definition}
\theoremstyle{example}
\newtheorem{example}[theorem]{Example}
\newtheorem{remark}[theorem]{Remark}

\typeout{TCILATEX Macros for Scientific Word 2.5 <04 SEP 96>.}

\typeout{NOTICE:  This macro file is NOT proprietary and may be 
freely copied and distributed.}

\makeatletter
\@ifundefined{@HHfloat}{\relax}{\typeout{** TCILaTeX detected 'float'-package:}	}	
%
\newcount\@hour\newcount\@minute\chardef\@x10\chardef\@xv60
\def\tcitime{
\def\@time{%
  \@minute\time\@hour\@minute\divide\@hour\@xv
  \ifnum\@hour<\@x 0\fi\the\@hour:%
  \multiply\@hour\@xv\advance\@minute-\@hour
  \ifnum\@minute<\@x 0\fi\the\@minute
  }}%

\@ifundefined{hyperref}{}{}

\@ifundefined{qExtProgCall}{\def\qExtProgCall#1#2#3#4#5#6{\relax}}{}
%
%
%
%
\def\QCTOpt[#1]#2{%
  \def\QCTOptB{#1}
  \def\QCTOptA{#2}
}
\def\QCTNOpt#1{%
  \def\QCTOptA{#1}
  \let\QCTOptB\empty
}
\def\Qct{%
  \@ifnextchar[{%
    \QCTOpt}{\QCTNOpt}
}
\def\QCBOpt[#1]#2{%
  \def\QCBOptB{#1}
  \def\QCBOptA{#2}
}
\def\QCBNOpt#1{%
  \def\QCBOptA{#1}
  \let\QCBOptB\empty
}
\def\Qcb{%
  \@ifnextchar[{%
    \QCBOpt}{\QCBNOpt}
}
\def\PrepCapArgs{%
  \ifx\QCBOptA\empty
    \ifx\QCTOptA\empty
      {}%
    \else
      \ifx\QCTOptB\empty
        {\QCTOptA}%
      \else
        [\QCTOptB]{\QCTOptA}%
      \fi
    \fi
  \else
    \ifx\QCBOptA\empty
      {}%
    \else
      \ifx\QCBOptB\empty
        {\QCBOptA}%
      \else
        [\QCBOptB]{\QCBOptA}%
      \fi
    \fi
  \fi
}
\newcount\GRAPHICSTYPE
\GRAPHICSTYPE=\z@
\def\GRAPHICSPS#1{%
 \ifcase\GRAPHICSTYPE
   \special{ps: #1}%
 \or
   \special{language "PS", include "#1"}%
 \fi
}%
%
%
%
\def\graffile#1#2#3#4{%
    \leavevmode
    \raise -#4 \BOXTHEFRAME{%
        \hbox to #2{\raise #3\hbox to #2{\null #1\hfil}}}%
}%
%
\def\draftbox#1#2#3#4{%
 \leavevmode\raise -#4 \hbox{%
  \frame{\rlap{\protect\tiny #1}\hbox to #2%
   {\vrule height#3 width\z@ depth\z@\hfil}%
  }%
 }%
}%
\newcount\draft
\draft=\z@

\newif\ifwasdraft
\wasdraftfalse

\def\GRAPHIC#1#2#3#4#5{%
 \ifnum\draft=\@ne\draftbox{#2}{#3}{#4}{#5}%
  \else\graffile{#1}{#3}{#4}{#5}%
  \fi
 }%
\def\addtoLaTeXparams#1{%
    \edef\LaTeXparams{\LaTeXparams #1}}%
%

\newif\ifBoxFrame \BoxFramefalse
\newif\ifOverFrame \OverFramefalse
\newif\ifUnderFrame \UnderFramefalse

\def\BOXTHEFRAME#1{%
   \hbox{%
      \ifBoxFrame
         \frame{#1}%
      \else
         {#1}%
      \fi
   }%
}

\def\doFRAMEparams#1{\BoxFramefalse\OverFramefalse\UnderFramefalse\readFRAMEparams#1\end}%
\def\readFRAMEparams#1{%
   \ifx#1\end%
  \let\next=\relax
  \else
  \ifx#1i\dispkind=\z@\fi
  \ifx#1d\dispkind=\@ne\fi
  \ifx#1f\dispkind=\tw@\fi
	\ifx#1h
    \ifnum\dispkind=\tw@
			\@ifundefined{@HHfloat}{
			  \addtoLaTeXparams{h}
		 	 }{
         \def\LaTeXparams{H}
         \typeout{tcilatex: attribute align pos of FRAME  set to H}
         \typeout{\space \space \space \space all other placement options (tbp) are ignored }
   		 }
	  \else
			\addtoLaTeXparams{h}
    \fi
	\fi
  \if\LaTeXparams H
  	 \ifx#1t\fi	 
  	 \ifx#1b\fi	 
     \ifx#1p\fi
  \else
      \ifx#1t\addtoLaTeXparams{t}\fi
      \ifx#1b\addtoLaTeXparams{b}\fi
      \ifx#1p\addtoLaTeXparams{p}\fi
  \fi

  \ifx#1X\BoxFrametrue\fi
  \ifx#1O\OverFrametrue\fi
  \ifx#1U\UnderFrametrue\fi
  \ifx#1w
    \ifnum\draft=1\wasdrafttrue\else\wasdraftfalse\fi
    \draft=\@ne
  \fi
  \let\next=\readFRAMEparams
  \fi
 \next
 }%
%

\def\IFRAME#1#2#3#4#5#6{%
      \bgroup
      \let\QCTOptA\empty
      \let\QCTOptB\empty
      \let\QCBOptA\empty
      \let\QCBOptB\empty
      #6%
      \parindent=0pt%
      \leftskip=0pt
      \rightskip=0pt
      \setbox0 = \hbox{\QCBOptA}%
      \@tempdima = #1\relax
      \ifOverFrame
          \typeout{This is not implemented yet}%
          \show\HELP
      \else
         \ifdim\wd0>\@tempdima
            \advance\@tempdima by \@tempdima
            \ifdim\wd0 >\@tempdima
               \textwidth=\@tempdima
               \setbox1 =\vbox{%
                  \noindent\hbox to \@tempdima{\hfill\GRAPHIC{#5}{#4}{#1}{#2}{#3}\hfill}\\%
                  \noindent\hbox to \@tempdima{\parbox[b]{\@tempdima}{\QCBOptA}}%
               }%
               \wd1=\@tempdima
            \else
               \textwidth=\wd0
               \setbox1 =\vbox{%
                 \noindent\hbox to \wd0{\hfill\GRAPHIC{#5}{#4}{#1}{#2}{#3}\hfill}\\%
                 \noindent\hbox{\QCBOptA}%
               }%
               \wd1=\wd0
            \fi
         \else
            \ifdim\wd0>0pt
              \hsize=\@tempdima
              \setbox1 =\vbox{%
                \unskip\GRAPHIC{#5}{#4}{#1}{#2}{0pt}%
                \break
                \unskip\hbox to \@tempdima{\hfill \QCBOptA\hfill}%
              }%
              \wd1=\@tempdima
           \else
              \hsize=\@tempdima
              \setbox1 =\vbox{%
                \unskip\GRAPHIC{#5}{#4}{#1}{#2}{0pt}%
              }%
              \wd1=\@tempdima
           \fi
         \fi
         \@tempdimb=\ht1
         \advance\@tempdimb by \dp1
         \advance\@tempdimb by -#2%
         \advance\@tempdimb by #3%
         \leavevmode
         \raise -\@tempdimb \hbox{\box1}%
      \fi
      \egroup%
}%
%
\def\DFRAME#1#2#3#4#5{%
 \begin{center}
     \let\QCTOptA\empty
     \let\QCTOptB\empty
     \let\QCBOptA\empty
     \let\QCBOptB\empty
     \ifOverFrame 
        #5\QCTOptA\par
     \fi
     \GRAPHIC{#4}{#3}{#1}{#2}{\z@}
     \ifUnderFrame 
        \nobreak\par #5\QCBOptA
     \fi
 \end{center}%
 }%
%
\def\FFRAME#1#2#3#4#5#6#7{%
 \begin{figure}[#1]%
  \let\QCTOptA\empty
  \let\QCTOptB\empty
  \let\QCBOptA\empty
  \let\QCBOptB\empty
  \ifOverFrame
    #4
    \ifx\QCTOptA\empty
    \else
      \ifx\QCTOptB\empty
        \caption{\QCTOptA}%
      \else
        \caption[\QCTOptB]{\QCTOptA}%
      \fi
    \fi
    \ifUnderFrame\else
      \label{#5}%
    \fi
  \else
    \UnderFrametrue%
  \fi
  \begin{center}\GRAPHIC{#7}{#6}{#2}{#3}{\z@}\end{center}%
  \ifUnderFrame
    #4
    \ifx\QCBOptA\empty
      \caption{}%
    \else
      \ifx\QCBOptB\empty
        \caption{\QCBOptA}%
      \else
        \caption[\QCBOptB]{\QCBOptA}%
      \fi
    \fi
    \label{#5}%
  \fi
  \end{figure}%
 }%
%
%
%
%
%
\newcount\dispkind%

\def\makeactives{
  \catcode`\"=\active
  \catcode`\;=\active
  \catcode`\:=\active
  \catcode`\'=\active
  \catcode`\~=\active
}
\bgroup
   \makeactives
   \gdef\activesoff{%
      \def"{\string"}
      \def;{\string;}
      \def:{\string:}
      \def'{\string'}
      \def~{\string~}
    }
\egroup

\def\FRAME#1#2#3#4#5#6#7#8{%
 \bgroup
 \@ifundefined{bbl@deactivate}{}{\activesoff}
 \ifnum\draft=\@ne
   \wasdrafttrue
 \else
   \wasdraftfalse%
 \fi
 \def\LaTeXparams{}%
 \dispkind=\z@
 \def\LaTeXparams{}%
 \doFRAMEparams{#1}%
 \ifnum\dispkind=\z@\IFRAME{#2}{#3}{#4}{#7}{#8}{#5}\else
  \ifnum\dispkind=\@ne\DFRAME{#2}{#3}{#7}{#8}{#5}\else
   \ifnum\dispkind=\tw@
    \edef\@tempa{\noexpand\FFRAME{\LaTeXparams}}%
    \@tempa{#2}{#3}{#5}{#6}{#7}{#8}%
    \fi
   \fi
  \fi
  \ifwasdraft\draft=1\else\draft=0\fi{}%
  \egroup
 }%
%

\def\TEXUX#1{"texux"}

%
%
%
%
%
%
%
%
%

%
\long\def\QQQ#1#2{%
     \long\expandafter\def\csname#1\endcsname{#2}}%
\@ifundefined{QTP}{\def\QTP#1{}}{}
\@ifundefined{QEXCLUDE}{\def\QEXCLUDE#1{}}{}
\@ifundefined{Qlb}{}{}
\@ifundefined{Qlt}{}{}
\long\def\QQA#1#2{}%
\def\QTR#1#2{{\csname#1\endcsname #2}}
\def\EXPAND#1[#2]#3{}%
\def\NOEXPAND#1[#2]#3{}%
\def\LaTeXparent#1{}%
\def\ChildStyles#1{}%
\def\ChildDefaults#1{}%
\def\QTagDef#1#2#3{}%
%
\@ifundefined{StyleEditBeginDoc}{}{}
%
\def\QQfnmark#1{\footnotemark}

%
\def\makeatletter\input gnuindex.sty\makeatother\makeindex{\makeatletter\input gnuindex.sty\makeatother\makeindex}%
\@ifundefined{INDEX}{\def\INDEX#1#2{}{}}{}%
\@ifundefined{SUBINDEX}{\def\SUBINDEX#1#2#3{}{}{}}{}%
\@ifundefined{initial}%
   {\def\initial#1{\bigbreak{\raggedright\large\bf #1}\kern 2\p@\penalty3000}}%
   {}%
\@ifundefined{entry}{}{}%
\@ifundefined{primary}{}{}%
\@ifundefined{secondary}{}{}%
\@ifundefined{ZZZ}{}{\makeatletter\input gnuindex.sty\makeatother\makeindex\makeatletter}%
%
\@ifundefined{abstract}{%
 \def\abstract{%
  \if@twocolumn
   \section*{Abstract (Not appropriate in this style!)}%
   \else \small 
   \begin{center}{\bf Abstract\vspace{-.5em}\vspace{\z@}}\end{center}%
   \quotation 
   \fi
  }%
 }{%
 }%
\@ifundefined{endabstract}{\def\endabstract
  {\if@twocolumn\else\endquotation\fi}}{}%
\@ifundefined{maketitle}{\def\maketitle#1{}}{}%
\@ifundefined{affiliation}{\def\affiliation#1{}}{}%
\@ifundefined{proof}{}{}%
\@ifundefined{endproof}{}{}%
\@ifundefined{newfield}{\def\newfield#1#2{}}{}%
\@ifundefined{chapter}{\def\chapter#1{\par(Chapter head:)#1\par }%
 \newcount\c@chapter}{}%
\@ifundefined{part}{\def\part#1{\par(Part head:)#1\par }}{}%
\@ifundefined{section}{\def\section#1{\par(Section head:)#1\par }}{}%
\@ifundefined{subsection}{\def\subsection#1%
 {\par(Subsection head:)#1\par }}{}%
\@ifundefined{subsubsection}{\def\subsubsection#1%
 {\par(Subsubsection head:)#1\par }}{}%
\@ifundefined{paragraph}{\def\paragraph#1%
 {\par(Subsubsubsection head:)#1\par }}{}%
\@ifundefined{subparagraph}{\def\subparagraph#1%
 {\par(Subsubsubsubsection head:)#1\par }}{}%
\@ifundefined{therefore}{}{}%
\@ifundefined{backepsilon}{}{}%
\@ifundefined{yen}{}{}%
\@ifundefined{registered}{%
   \def\registered{\relax\ifmmode{}\r@gistered
                    \else$\m@th\r@gistered$\fi}%
 \def\r@gistered{^{\ooalign
  {\hfil\raise.07ex\hbox{$\scriptstyle\rm\text{R}$}\hfil\crcr
  \mathhexbox20D}}}}{}%
\@ifundefined{Eth}{}{}%
\@ifundefined{eth}{}{}%
\@ifundefined{Thorn}{}{}%
\@ifundefined{thorn}{}{}%
%
\@ifundefined{degree}{}{}%
%
\newdimen\theight
\def\Column{%
 \vadjust{\setbox\z@=\hbox{\scriptsize\quad\quad tcol}%
  \theight=\ht\z@\advance\theight by \dp\z@\advance\theight by \lineskip
  \kern -\theight \vbox to \theight{%
   \rightline{\rlap{\box\z@}}%
   \vss
   }%
  }%
 }%
\def\qed{%
 \ifhmode\unskip\nobreak\fi\ifmmode\ifinner\else\hskip5\p@\fi\fi
 \hbox{\hskip5\p@\vrule width4\p@ height6\p@ depth1.5\p@\hskip\p@}%
 }%
\def\miss{\hbox{\vrule height2\p@ width 2\p@ depth\z@}}%
%
%
\def\tcol#1{{\baselineskip=6\p@ \vcenter{#1}} \Column}  %
%
%
%
%
%

\def\newfmtname{LaTeX2e}
\def\chkcompat{%
   \if@compatibility
   \else
     \usepackage{latexsym}
   \fi
}

\ifx\fmtname\newfmtname
  \DeclareOldFontCommand{\rm}{\normalfont\rmfamily}{\mathrm}
  \DeclareOldFontCommand{\sf}{\normalfont\sffamily}{\mathsf}
  \DeclareOldFontCommand{\tt}{\normalfont\ttfamily}{\mathtt}
  \DeclareOldFontCommand{\bf}{\normalfont\bfseries}{\mathbf}
  \DeclareOldFontCommand{\it}{\normalfont\itshape}{\mathit}
  \DeclareOldFontCommand{\sl}{\normalfont\slshape}{\@nomath\sl}
  \DeclareOldFontCommand{\sc}{\normalfont\scshape}{\@nomath\sc}
  \chkcompat
\fi

%

\def\alpha{{\Greekmath 010B}}%
\def\beta{{\Greekmath 010C}}%
\def\gamma{{\Greekmath 010D}}%
\def\delta{{\Greekmath 010E}}%
\def\epsilon{{\Greekmath 010F}}%
\def\zeta{{\Greekmath 0110}}%
\def\eta{{\Greekmath 0111}}%
\def\theta{{\Greekmath 0112}}%
\def\iota{{\Greekmath 0113}}%
\def\kappa{{\Greekmath 0114}}%
\def\lambda{{\Greekmath 0115}}%
\def\mu{{\Greekmath 0116}}%
\def\nu{{\Greekmath 0117}}%
\def\xi{{\Greekmath 0118}}%
\def\pi{{\Greekmath 0119}}%
\def\rho{{\Greekmath 011A}}%
\def\sigma{{\Greekmath 011B}}%
\def\tau{{\Greekmath 011C}}%
\def\upsilon{{\Greekmath 011D}}%
\def\phi{{\Greekmath 011E}}%
\def\chi{{\Greekmath 011F}}%
\def\psi{{\Greekmath 0120}}%
\def\omega{{\Greekmath 0121}}%
\def\varepsilon{{\Greekmath 0122}}%
\def\vartheta{{\Greekmath 0123}}%
\def\varpi{{\Greekmath 0124}}%
\def\varrho{{\Greekmath 0125}}%
\def\varsigma{{\Greekmath 0126}}%
\def\varphi{{\Greekmath 0127}}%

\def\nabla{{\Greekmath 0272}}
\def\FindBoldGroup{%
   {\setbox0=\hbox{$\mathbf{x\global\edef\theboldgroup{\the\mathgroup}}$}}%
}

\def\Greekmath#1#2#3#4{%
    \if@compatibility
        \ifnum\mathgroup=\symbold
           \mathchoice{\mbox{\boldmath$\displaystyle\mathchar"#1#2#3#4$}}%
                      {\mbox{\boldmath$\textstyle\mathchar"#1#2#3#4$}}%
                      {\mbox{\boldmath$\scriptstyle\mathchar"#1#2#3#4$}}%
                      {\mbox{\boldmath$\scriptscriptstyle\mathchar"#1#2#3#4$}}%
        \else
           \mathchar"#1#2#3#4%
        \fi 
    \else 
        \FindBoldGroup
        \ifnum\mathgroup=\theboldgroup 
           \mathchoice{\mbox{\boldmath$\displaystyle\mathchar"#1#2#3#4$}}%
                      {\mbox{\boldmath$\textstyle\mathchar"#1#2#3#4$}}%
                      {\mbox{\boldmath$\scriptstyle\mathchar"#1#2#3#4$}}%
                      {\mbox{\boldmath$\scriptscriptstyle\mathchar"#1#2#3#4$}}%
        \else
           \mathchar"#1#2#3#4%
        \fi     	    
	  \fi}

\newif\ifGreekBold  \GreekBoldfalse
\let\SAVEPBF=\pbf
\def\pbf{\GreekBoldtrue\SAVEPBF}%

\@ifundefined{theorem}{\newtheorem{theorem}{Theorem}}{}
\@ifundefined{lemma}{\newtheorem{lemma}[theorem]{Lemma}}{}
\@ifundefined{corollary}{}{}
\@ifundefined{conjecture}{}{}
\@ifundefined{proposition}{\newtheorem{proposition}[theorem]{Proposition}}{}
\@ifundefined{axiom}{}{}
\@ifundefined{remark}{\newtheorem{remark}{Remark}}{}
\@ifundefined{example}{}{}
\@ifundefined{exercise}{}{}
\@ifundefined{definition}{}{}

\@ifundefined{mathletters}{%
  \newcounter{equationnumber}  
  \def\mathletters{%
     \addtocounter{equation}{1}
     \edef\@currentlabel{\theequation}%
     \setcounter{equationnumber}{\c@equation}
     \setcounter{equation}{0}%
     \edef\theequation{\@currentlabel\noexpand\alph{equation}}%
  }
  
}{}

\@ifundefined{BibTeX}{%
    \def\BibTeX{{\rm B\kern-.05em{\sc i\kern-.025em b}\kern-.08em
                 T\kern-.1667em\lower.7ex\hbox{E}\kern-.125emX}}}{}%
\@ifundefined{AmS}%
    {\def\AmS{{\protect\usefont{OMS}{cmsy}{m}{n}%
                A\kern-.1667em\lower.5ex\hbox{M}\kern-.125emS}}}{}%
\@ifundefined{AmSTeX}{}{}%
%

%
%
\ifx\ds@amstex\relax
   \message{amstex already loaded}\makeatother 
\else
   \@ifpackageloaded{amstex}%
      {\message{amstex already loaded}\makeatother }
      {}
   \@ifpackageloaded{amsgen}%
      {\message{amsgen already loaded}\makeatother }
      {}
\fi
%
%
%
%
\def\DN@{\def\next@}%
\def\eat@#1{}%
\let\DOTSI\relax
\def\RIfM@{\relax\ifmmode}%
\def\FN@{\futurelet\next}%
\newcount\intno@
\def\iint{\DOTSI\intno@\tw@\FN@\ints@}%
\def\iiint{\DOTSI\intno@\thr@@\FN@\ints@}%
\def\iiiint{\DOTSI\intno@4 \FN@\ints@}%
\def\idotsint{\DOTSI\intno@\z@\FN@\ints@}%
\def\ints@{\findlimits@\ints@@}%
\newif\iflimtoken@
\newif\iflimits@
\def\findlimits@{\limtoken@true\ifx\next\limits\limits@true
 \else\ifx\next\nolimits\limits@false\else
 \limtoken@false\ifx\ilimits@\nolimits\limits@false\else
 \ifinner\limits@false\else\limits@true\fi\fi\fi\fi}%
\def\multint@{\int\ifnum\intno@=\z@\intdots@                          
 \else\intkern@\fi                                                    
 \ifnum\intno@>\tw@\int\intkern@\fi                                   
 \ifnum\intno@>\thr@@\int\intkern@\fi                                 
 \int}
\def\multintlimits@{\intop\ifnum\intno@=\z@\intdots@\else\intkern@\fi
 \ifnum\intno@>\tw@\intop\intkern@\fi
 \ifnum\intno@>\thr@@\intop\intkern@\fi\intop}%
\def\intic@{%
    \mathchoice{\hskip.5em}{\hskip.4em}{\hskip.4em}{\hskip.4em}}%
\def\negintic@{\mathchoice
 {\hskip-.5em}{\hskip-.4em}{\hskip-.4em}{\hskip-.4em}}%
\def\ints@@{\iflimtoken@                                              
 \def\ints@@@{\iflimits@\negintic@
   \mathop{\intic@\multintlimits@}\limits                             
  \else\multint@\nolimits\fi                                          
  \eat@}
 \else                                                                
 \def\ints@@@{\iflimits@\negintic@
  \mathop{\intic@\multintlimits@}\limits\else
  \multint@\nolimits\fi}\fi\ints@@@}%
\def\intkern@{\mathchoice{\!\!\!}{\!\!}{\!\!}{\!\!}}%
\def\plaincdots@{\mathinner{\cdotp\cdotp\cdotp}}%
\def\intdots@{\mathchoice{\plaincdots@}%
 {{\cdotp}\mkern1.5mu{\cdotp}\mkern1.5mu{\cdotp}}%
 {{\cdotp}\mkern1mu{\cdotp}\mkern1mu{\cdotp}}%
 {{\cdotp}\mkern1mu{\cdotp}\mkern1mu{\cdotp}}}%
%
%
%
\def\RIfM@{\relax\protect\ifmmode}
\def\text{\RIfM@\expandafter\text@\else\expandafter\mbox\fi}
\let\nfss@text\text
\def\text@#1{\mathchoice
   {\textdef@\displaystyle\f@size{#1}}%
   {\textdef@\textstyle\tf@size{\firstchoice@false #1}}%
   {\textdef@\textstyle\sf@size{\firstchoice@false #1}}%
   {\textdef@\textstyle \ssf@size{\firstchoice@false #1}}%
   \glb@settings}

\def\textdef@#1#2#3{\hbox{{%
                    \everymath{#1}%
                    \let\f@size#2\selectfont
                    #3}}}
\newif\iffirstchoice@
\firstchoice@true
%
%
%
%
%
\def\Let@{\relax\iffalse{\fi\let\\=\cr\iffalse}\fi}%
\def\vspace@{\def\vspace##1{\crcr\noalign{\vskip##1\relax}}}%
\def\multilimits@{\bgroup\vspace@\Let@
 \baselineskip\fontdimen10 \scriptfont\tw@
 \advance\baselineskip\fontdimen12 \scriptfont\tw@
 \lineskip\thr@@\fontdimen8 \scriptfont\thr@@
 \lineskiplimit\lineskip
 \vbox\bgroup\ialign\bgroup\hfil$\m@th\scriptstyle{##}$\hfil\crcr}%
\def\Sb{_\multilimits@}%
\def\endSb{\crcr\egroup\egroup\egroup}%
\def\Sp{^\multilimits@}%

%
%
%
\newdimen\ex@
\ex@.2326ex
\def\rightarrowfill@#1{$#1\m@th\mathord-\mkern-6mu\cleaders
 \hbox{$#1\mkern-2mu\mathord-\mkern-2mu$}\hfill
 \mkern-6mu\mathord\rightarrow$}%
\def\leftarrowfill@#1{$#1\m@th\mathord\leftarrow\mkern-6mu\cleaders
 \hbox{$#1\mkern-2mu\mathord-\mkern-2mu$}\hfill\mkern-6mu\mathord-$}%
\def\leftrightarrowfill@#1{$#1\m@th\mathord\leftarrow
\mkern-6mu\cleaders
 \hbox{$#1\mkern-2mu\mathord-\mkern-2mu$}\hfill
 \mkern-6mu\mathord\rightarrow$}%
\def\overrightarrow{\mathpalette\overrightarrow@}%
\def\overrightarrow@#1#2{\vbox{\ialign{##\crcr\rightarrowfill@#1\crcr
 \noalign{\kern-\ex@\nointerlineskip}$\m@th\hfil#1#2\hfil$\crcr}}}%

\def\overleftarrow{\mathpalette\overleftarrow@}%
\def\overleftarrow@#1#2{\vbox{\ialign{##\crcr\leftarrowfill@#1\crcr
 \noalign{\kern-\ex@\nointerlineskip}$\m@th\hfil#1#2\hfil$\crcr}}}%
\def\overleftrightarrow{\mathpalette\overleftrightarrow@}%
\def\overleftrightarrow@#1#2{\vbox{\ialign{##\crcr
   \leftrightarrowfill@#1\crcr
 \noalign{\kern-\ex@\nointerlineskip}$\m@th\hfil#1#2\hfil$\crcr}}}%
\def\underrightarrow{\mathpalette\underrightarrow@}%
\def\underrightarrow@#1#2{\vtop{\ialign{##\crcr$\m@th\hfil#1#2\hfil
  $\crcr\noalign{\nointerlineskip}\rightarrowfill@#1\crcr}}}%

\def\underleftarrow{\mathpalette\underleftarrow@}%
\def\underleftarrow@#1#2{\vtop{\ialign{##\crcr$\m@th\hfil#1#2\hfil
  $\crcr\noalign{\nointerlineskip}\leftarrowfill@#1\crcr}}}%
\def\underleftrightarrow{\mathpalette\underleftrightarrow@}%
\def\underleftrightarrow@#1#2{\vtop{\ialign{##\crcr$\m@th
  \hfil#1#2\hfil$\crcr
 \noalign{\nointerlineskip}\leftrightarrowfill@#1\crcr}}}%


\def\qopnamewl@#1{\mathop{\operator@font#1}\nlimits@}
\let\nlimits@\displaylimits
\def\setboxz@h{\setbox\z@\hbox}

\def\varlim@#1#2{\mathop{\vtop{\ialign{##\crcr
 \hfil$#1\m@th\operator@font lim$\hfil\crcr
 \noalign{\nointerlineskip}#2#1\crcr
 \noalign{\nointerlineskip\kern-\ex@}\crcr}}}}

 \def\rightarrowfill@#1{\m@th\setboxz@h{$#1-$}\ht\z@\z@
  $#1\copy\z@\mkern-6mu\cleaders
  \hbox{$#1\mkern-2mu\box\z@\mkern-2mu$}\hfill
  \mkern-6mu\mathord\rightarrow$}
\def\leftarrowfill@#1{\m@th\setboxz@h{$#1-$}\ht\z@\z@
  $#1\mathord\leftarrow\mkern-6mu\cleaders
  \hbox{$#1\mkern-2mu\copy\z@\mkern-2mu$}\hfill
  \mkern-6mu\box\z@$}

\def\projlim{\qopnamewl@{proj\,lim}}
\def\injlim{\qopnamewl@{inj\,lim}}
\def\varinjlim{\mathpalette\varlim@\rightarrowfill@}
\def\varprojlim{\mathpalette\varlim@\leftarrowfill@}
\def\varliminf{\mathpalette\varliminf@{}}
\def\varliminf@#1{\mathop{\underline{\vrule\@depth.2\ex@\@width\z@
   \hbox{$#1\m@th\operator@font lim$}}}}
\def\varlimsup{\mathpalette\varlimsup@{}}
\def\varlimsup@#1{\mathop{\overline
  {\hbox{$#1\m@th\operator@font lim$}}}}

%
%
%
\def\dfrac#1#2{{\displaystyle {#1 \over #2}}}%
\def\binom#1#2{{#1 \choose #2}}%
%
%
%
%
%
%
%
%
%
%
%
%
%
%
%
%
%
%

%
%
%
%
%
%
%
%
%
%
%
%
%
%
%
%
%
%
%
%
%
%

%
%
%
%
%
%
%
%
%
%
%
%
%
%
%
%
%
%
%
%
%
%
%
%
\begingroup \catcode `|=0 \catcode `[= 1
\catcode`]=2 \catcode `\{=12 \catcode `\}=12
\catcode`\\=12 
|gdef|@alignverbatim#1\end{align}[#1|end[align]]
|gdef|@salignverbatim#1\end{align*}[#1|end[align*]]

|gdef|@alignatverbatim#1\end{alignat}[#1|end[alignat]]
|gdef|@salignatverbatim#1\end{alignat*}[#1|end[alignat*]]

|gdef|@xalignatverbatim#1\end{xalignat}[#1|end[xalignat]]
|gdef|@sxalignatverbatim#1\end{xalignat*}[#1|end[xalignat*]]

|gdef|@gatherverbatim#1\end{gather}[#1|end[gather]]
|gdef|@sgatherverbatim#1\end{gather*}[#1|end[gather*]]

|gdef|@gatherverbatim#1\end{gather}[#1|end[gather]]
|gdef|@sgatherverbatim#1\end{gather*}[#1|end[gather*]]

|gdef|@multilineverbatim#1\end{multiline}[#1|end[multiline]]
|gdef|@smultilineverbatim#1\end{multiline*}[#1|end[multiline*]]

|gdef|@arraxverbatim#1\end{arrax}[#1|end[arrax]]
|gdef|@sarraxverbatim#1\end{arrax*}[#1|end[arrax*]]

|gdef|@tabulaxverbatim#1\end{tabulax}[#1|end[tabulax]]
|gdef|@stabulaxverbatim#1\end{tabulax*}[#1|end[tabulax*]]

|endgroup

\def\align{\@verbatim \frenchspacing\@vobeyspaces \@alignverbatim
You are using the "align" environment in a style in which it is not defined.}

\@namedef{align*}{\@verbatim\@salignverbatim
You are using the "align*" environment in a style in which it is not defined.}
\expandafter\let\csname endalign*\endcsname =\endtrivlist

\def\alignat{\@verbatim \frenchspacing\@vobeyspaces \@alignatverbatim
You are using the "alignat" environment in a style in which it is not defined.}

\@namedef{alignat*}{\@verbatim\@salignatverbatim
You are using the "alignat*" environment in a style in which it is not defined.}
\expandafter\let\csname endalignat*\endcsname =\endtrivlist

\def\xalignat{\@verbatim \frenchspacing\@vobeyspaces \@xalignatverbatim
You are using the "xalignat" environment in a style in which it is not defined.}

\@namedef{xalignat*}{\@verbatim\@sxalignatverbatim
You are using the "xalignat*" environment in a style in which it is not defined.}
\expandafter\let\csname endxalignat*\endcsname =\endtrivlist

\def\gather{\@verbatim \frenchspacing\@vobeyspaces \@gatherverbatim
You are using the "gather" environment in a style in which it is not defined.}

\@namedef{gather*}{\@verbatim\@sgatherverbatim
You are using the "gather*" environment in a style in which it is not defined.}
\expandafter\let\csname endgather*\endcsname =\endtrivlist

\def\multiline{\@verbatim \frenchspacing\@vobeyspaces \@multilineverbatim
You are using the "multiline" environment in a style in which it is not defined.}

\@namedef{multiline*}{\@verbatim\@smultilineverbatim
You are using the "multiline*" environment in a style in which it is not defined.}
\expandafter\let\csname endmultiline*\endcsname =\endtrivlist

\def\arrax{\@verbatim \frenchspacing\@vobeyspaces \@arraxverbatim
You are using a type of "array" construct that is only allowed in AmS-LaTeX.}

\def\tabulax{\@verbatim \frenchspacing\@vobeyspaces \@tabulaxverbatim
You are using a type of "tabular" construct that is only allowed in AmS-LaTeX.}

\@namedef{arrax*}{\@verbatim\@sarraxverbatim
You are using a type of "array*" construct that is only allowed in AmS-LaTeX.}
\expandafter\let\csname endarrax*\endcsname =\endtrivlist

\@namedef{tabulax*}{\@verbatim\@stabulaxverbatim
You are using a type of "tabular*" construct that is only allowed in AmS-LaTeX.}
\expandafter\let\csname endtabulax*\endcsname =\endtrivlist


\def\@@eqncr{\let\@tempa\relax
    \ifcase\@eqcnt \def\@tempa{& & &}\or \def\@tempa{& &}%
      \else \def\@tempa{&}\fi
     \@tempa
     \if@eqnsw
        \iftag@
           \@taggnum
        \else
           \@eqnnum\stepcounter{equation}%
        \fi
     \fi
     \global\tag@false
     \global\@eqnswtrue
     \global\@eqcnt\z@\cr}

 \def\endequation{%
     \ifmmode\ifinner 
      \iftag@
        \addtocounter{equation}{-1} 
        $\hfil
           \displaywidth\linewidth\@taggnum\egroup \endtrivlist
        \global\tag@false
        \global\@ignoretrue   
      \else
        $\hfil
           \displaywidth\linewidth\@eqnnum\egroup \endtrivlist
        \global\tag@false
        \global\@ignoretrue 
      \fi
     \else   
      \iftag@
        \addtocounter{equation}{-1} 
        \eqno \hbox{\@taggnum}
        \global\tag@false%
        $$\global\@ignoretrue
      \else
        \eqno \hbox{\@eqnnum}
        $$\global\@ignoretrue
      \fi
     \fi\fi
 } 

 \newif\iftag@ \tag@false
 
 \def\tag{\@ifnextchar*{\@tagstar}{\@tag}}
 \def\@tag#1{%
     \global\tag@true
     \global\def\@taggnum{(#1)}}
 \def\@tagstar*#1{%
     \global\tag@true
     \global\def\@taggnum{#1}%
}


\makeatother

\begin{document}

\title{{\Huge Flexible-bandwidth Needlets}}
\author{Claudio Durastanti \\
\textit{Dipartimento SBAI, La Sapienza Università di Roma} \and Domenico
Marinucci \\
\textit{Dipartimento di Matematica, Università di Roma Tor Vergata} \and %
Anna Paola Todino \\
\textit{Dipartimento di Matematica, Politecnico di Torino} }

\date{}
\maketitle

\begin{abstract}
We investigate here a generalized construction of spherical
wavelets/needlets which admits extra-flexibility in the harmonic domain,
i.e., it allows the corresponding support in multipole (frequency) space to
vary in more general forms than in the standard constructions. We study the
analytic properties of this system and we investigate its behaviour when
applied to isotropic random fields: more precisely, we establish asymptotic
localization and uncorrelation properties (in the high-frequency sense)
under broader assumptions than typically considered in the literature.
\end{abstract}

\begin{itemize}
\item \textbf{Keywords and Phrases:} Spherical wavelets, needlets, spherical
random fields, high-frequency asymptotics.

\item \textbf{AMS Classification:} 60G60; 62M40, 42C40.
\end{itemize}

\section{Introduction}

\label{sec:introduction}The statistical analysis of spherical random fields
has become a rather important research topic in the last 15 years. In
particular, strong motivations have come from a variety of fields, most
notably Cosmology and Astrophysics, Geophysics, Climate Sciences: at the
same time, it has become clear that the analysis of spherical data can lead
to a number of deep mathematical issues, which have independent interest (see \cite{Gneiting,Lang,Moller,Ziegel} and the references therein).
Among these issues, a very important role has been played by the
investigation of spherical wavelet systems, and the analysis of their
properties when applied to spherical random fields.

Among spherical wavelets, one of the most successful proposals is certainly
the needlet system, which was introduced by \cite{npw1,npw2} and then
applied to random fields and cosmological data immediately after by \cite
{bkmpAoS, MNRAS2008, PBM2006}; extensions to more general
harmonic kernels were discussed by \cite{gm2}. Needlets on one hand
represent a tight-frame system and hence satisfy classical requirements of
approximation theory; on the other hand under some regularity conditions
needlet coefficients have been shown to enjoy asymptotic uncorrelation
properties (in the high-resolution sense) which makes their application to
random fields extremely powerful. Extensions of the needlet construction to
more general homogeneous spaces of compact groups were given for instance by
\cite{Coulhon,GellerPesenson,KerkyacharianPetrushev}; statistical
applications are currently too many to be recalled in any reasonable
completeness: we refer for instance to \cite{KerkyacharianPhamPicard,KerkyacharianNicklPicard} or more recently \cite
{BD17, Cheng, Claudio16,  dur,LeGia, ST,Wang, Fan, Olenko}. Applications in Cosmology and Astrophysics are
discussed for instance in \cite{carron,mcewen,Oppizzi, PlanckIS, WangSloan,
wiaux2}) and the references therein.

Our purpose in this paper is to generalize the needlet construction,
allowing for a much more flexible form of the kernel function in the
harmonic domain; we then proceed to investigate the properties of these
generalized needlet transforms when applied to isotropic spherical random
fields. In particular, we establish explicit bounds on the decay of the
correlation function for needlet coefficients under much broader conditions
than given in the existing literature; these results make possible
asymptotic statistical inference in the high-frequency sense for a much
greater family of random models. In order to make these statements more
precise, however, we need first to review some notation and background
results.

\subsection{Some Background Results and Notation}

Let us recall first some standard background material on harmonic analysis
on the sphere; we refer for instance to \cite{atki,MaPeCUP} for more
discussion and details. We write as usual $L^{2}\left( \mathbb{S}^{d}\right)
$ to denote the space of square-integrable functions on the sphere (with
respect to Lebesgue measure), and $\omega _{d}=\frac{2\pi ^{\frac{d+1}{2}}}{
\Gamma \left( \frac{d+1}{2}\right) }$ to denote the $d$-dimensional
spherical surface measure, with $\Gamma (\cdot )$ the usual Gamma function.
The following decomposition holds:
\begin{equation*}
L^{2}\left( \mathbb{S}^{d}\right) =\bigoplus_{\ell =0}^{\infty }\mathcal{H}
_{\ell ;d}\text{ },
\end{equation*}
where $\mathcal{H}_{\ell ;d}$ is the restriction to $\mathbb{S}^{d}$ of the
space of harmonic and homogeneous polynomials of degree $\ell $ on $\mathbb{R
}^{d+1}$. The spaces $\mathcal{H}_{\ell ;d}$ have dimension
\begin{equation*}
N_{\ell ;d}=\frac{\ell +\eta _{d}}{\eta _{d}}\binom{\ell +2\eta _{d}-1}{\ell
}=\frac{2\ell ^{d-1}}{\left( d-1\right) !}\left( 1+o_{\ell }\left( 1\right)
\right) ,\quad \eta _{d}=\frac{\left( d-1\right) }{2}\text{ ;}
\end{equation*}
the elements $\left\{ f_{\ell }\in \mathcal{H}_{\ell ;d}\right\} $ are the
eigenfunctions of the Laplace-Beltrami operator
\begin{equation*}
\Delta _{\mathbb{S}^{d}}f_{\ell }=-\ell (\ell +d-1)f_{\ell }\text{ , }\ell
=0,1,2,...
\end{equation*}
On $\mathcal{H}_{\ell ;d},$ we can choose a (real or complex-valued)
orthonormal basis, which we write as\\ $\left\{ Y_{\ell, m}:m=1,\ldots ,N_{\ell
;d}\right\} ,$ omitting the dependence on the dimension $d.$ More
explicitly, this entails that every function $f\in L^{2}\left( \mathbb{S}
^{d}\right) $ admits the expansion
\begin{equation}
f\left( x\right) =\sum_{\ell \geq 0}\sum_{m=1}^{N_{\ell ;d}}a_{\ell,
m}Y_{\ell ,m}\left( x\right) ,  \label{eq:exp}
\end{equation}
where, for $\ell \geq 0$ and $m=1,\ldots ,N_{\ell ;d}$,
\begin{equation*}
a_{\ell, m}=\int_{\mathbb{S}^{d}}\overline{Y}_{\ell, m}\left( x\right) f\left(
x\right) dx\in \mathbb{C}\text{ },
\end{equation*}
are the so-called spherical harmonic coefficients. For any choice of an
orthonormal basis, the following \emph{addition formula} holds
\begin{eqnarray*}
Z_{\ell ;d}\left( x_{1},x_{2}\right)  &=&\sum_{m=1}^{N_{\ell ,d}}\overline{Y}
_{\ell, m}\left( x_{1}\right) Y_{\ell, m}\left( x_{2}\right)  \\&=& \frac{\ell +\eta _{d}}{\eta _{d}\omega _{d}}G_{\ell }^{\left( \eta
_{d}\right) }\left( \left\langle x_{1},x_{2}\right\rangle \right) ,\quad
\text{for }x_{1},x_{2}\in \mathbb{S}^{d},  
\end{eqnarray*}
where $\left\langle \cdot ,\cdot \right\rangle $ denotes the standard scalar
product over $\mathbb{R}^{d+1}$, $G_{\ell }^{\left( \eta _{d}\right) }$ is
the Gegenbauer polynomial of degree $\ell $ and parameter $\eta _{d}$ (see
\cite{atki}, Chapter 2); with some abuse of notation, we shall write both $
Z_{\ell ;d}\left( x,y\right) $ or $Z_{\ell ;d}\left( \langle x,y\rangle
\right) $, depending on the context. For instance, for $d=2$ we have
\begin{equation*}
Z_{\ell ;2}\left( x_{1},x_{2}\right) =\frac{2\ell +1}{4\pi }P_{\ell }\left(
\left\langle x_{1},x_{2}\right\rangle \right) \text{ ,}
\end{equation*}
where
\begin{equation*}
P_{\ell }\left( \cdot \right) :[-1,1]\rightarrow \mathbb{R}\text{ , }P_{\ell
}\left( t\right) :=\frac{d^{\ell }}{dt^{\ell }}(t^{2}-1)^{\ell }\text{ , }
\ell =0,1,2,...
\end{equation*}
is the usual Legendre polynomial.

The following \emph{reproducing kernel property} holds
\begin{equation*}
\int_{\mathbb{S}^{d}}Z_{\ell ;d}\left( \langle x,y\rangle \right) P_{\ell
^{\prime };d}\left( \langle y,z\rangle \right) dy=Z_{\ell ;d}\left( \langle
x,z\rangle \right) \delta _{\ell ^{\prime }}^{\ell },\text{ for all }\ell
,\ell ^{\prime }\in \mathbb{N}\text{ ,}  
\end{equation*}
where $\delta _{\cdot }^{\cdot }$ is the Kronecker delta. Clearly for any $
f\in L^{2}\left( \mathbb{S}^{d}\right) $ its projection over the space $
\mathcal{H}_{\ell ;d}$ is given by
\begin{equation*}
f_{\ell }\left( x\right) =Z_{\ell ;d}\left[ f\right] \left( x\right) =\int_{
\mathbb{S}^{d}}Z_{\ell ;d}\left( \langle x,y\rangle \right) f\left( y\right)
dy=\sum_{m=1}^{N_{\ell ,d}}a_{\ell ,m}Y_{\ell ,m}\left( x\right) .
\end{equation*}%
The standard needlet kernel, as introduced by \cite{npw1}, can then be
defined as follows; for any $j=1,2,...$
\begin{equation*}
\Psi _{j}\left( x,y\right) =\sum_{\ell \geq 0}b\left( \frac{\ell }{B^{j}}
\right) Z_{\ell ,d}\left( \left\langle x,y\right\rangle \right) ,
\end{equation*}
where $B>1$ is a fixed (bandwidth) parameter, whereas $b(\cdot ):\mathbb{R}
\rightarrow \mathbb{R}$ is a weight function which satisfies three
properties: a) it is compactly supported in $[\frac{1}{B},B];$ b) it is $
C^{\infty };$ c) it satisfies the \emph{Partition of Unity} property,
namely, $\sum_{j\geq 1}b^{2}(\frac{\ell }{B^{j}})=1$, for all $\ell \in
\mathbb{N}.$

Under these conditions, in \cite{npw1} the following nearly-exponential
localization property is established; for all $x,y\in \mathbb{S}^{2}$ and
for all integers $M,$ there exists a constant $C_{M}$ (depending on $b(\cdot
),$ but not on $x,y$ or $j)$ such that
\begin{equation}
\left\vert \Psi _{j}\left( x,y\right) \right\vert \leq \frac{C_{M}B^{dj}}{
\left\vert 1+B^{j}d_{\mathbb{S}^{2}}(x,y)\right\vert ^{M}}\text{ ,}  \label{StanLocProp}
\end{equation}
where $d_{\mathbb{S}^{2}}(x,y):=\arccos (\left\langle x,y\right\rangle )$ is
the standard geodesic distance on the sphere.  This key localization
property can then be exploited to derive a number of extremely important
features of the needlet system; indeed the needlet projectors are simply
defined by
\begin{equation}
\psi _{j,k}\left( x\right) =\sqrt{\lambda _{j,k}}\sum_{\ell \geq 0}b\left(
\frac{\ell }{B^{j}}\right) Z_{\ell ,d}\left( \left\langle x,\xi
_{j,k}\right\rangle \right) ,  \label{needproj}
\end{equation}
where $\left\{ \xi _{j,k}:j\geq 0,k=1,\ldots ,K_{j}\right\} $ and $\left\{
\lambda _{j,k}:j\in \mathbb{N},k=1,\ldots ,K_{j}\right\} $ are cubature points and
weights respectively, see also \cite{npw1}. The corresponding needlet
coefficients are defined as
\begin{equation}
\beta _{j,k}=\langle f,\psi _{j,k}\rangle _{L^{2}\left( \mathbb{S}^{d}\right)
},\quad j\geq 0,k=1,\ldots ,K_{j},  \label{need_coef_disc}
\end{equation}
where $f(\cdot )$ denotes any (random or deterministic) function in $L^{2}(
\mathbb{S}^{d}).$

As mentioned above, a key ingredient for the interest that needlet
transforms have drawn when applied to the analysis of spherical random
fields are their asymptotic uncorrelation properties. We can recall them
briefly as follows. Assume we have a zero-mean, finite variance, isotropic
random field on $\mathbb{S}^{d};$ then the spectral representation (\ref
{eq:exp}) holds in the $L^{2}(\Omega \times \mathbb{S}^{d})$ sense, where the
family of zero-mean random coefficients $\left\{ a_{\ell, m}\right\}_{\ell\in\mathbb{N},m=1,\ldots,N_{\ell;d}}$  satisfies
\begin{equation*}
\mathbb{E}\left[ a_{\ell, m}\overline{a}_{\ell ^{\prime },m^{\prime }}\right]
=\delta _{\ell }^{\ell ^{\prime }}\delta _{m}^{m^{\prime }}C_{\ell }\text{ ,
}\ell,\ell^{\prime} \in \mathbb{N},m,m^{\prime}=1,\ldots,N_{\ell;d} .
\end{equation*}
The sequence $\left\{ C_{\ell }\right\}_{\ell \in \mathbb{N}} $ is labelled the angular power
spectrum of the random field. In \cite{bkmpAoS} and many subsequent papers
(starting from \cite{BKMPBer,mayeli}), it is assumed that the angular power spectrum obeys some
regularity condition such as
\begin{equation}
C_{\ell }=g(\ell )\ell ^{-\alpha },\text{ }\alpha >2\text{ , \ some positive
}g\in C^{\infty }\text{ such that }g^{(r)}(u)=O(u^{-r})\text{ , as }
u\rightarrow \infty \text{ .}  \label{power1}
\end{equation}
For instance, $g(\cdot)$ could be any slowly-varying function, in the sense of
\cite{BinghamGoldieTeugels}. Now write
\begin{equation}
\beta _{j}(x):=\int_{\mathbb{S}^{d}}f(y)\Psi _{j}\left( x,y\right) dy\text{ ;
}  \label{need_coef_cont}
\end{equation}
up to a normalization, (\ref{need_coef_cont}) can be simply interpreted as a
continuous version of (\ref{need_coef_disc}): note indeed that $\beta
_{j,k}=\beta _{j}(\xi _{j,k})\sqrt{\lambda _{j,k}}$. Assuming that $\left\{
f(\cdot )\right\} $ is an isotropic spherical random field whose angular
power spectra satisfies (\ref{power1}), it was shown in \cite{bkmpAoS} that
for all positive integers $N$ there exists $C_{N}>0$ such that
\begin{equation}
\mathrm{Corr}(\beta _{j}(x),\beta _{j}(y))\leq \frac{C_{N}}{(1+B^{j}d_{\mathbb{S}
^{2}}(x,y))^{N}}\text{ for all }j\in \mathbb{N}.  \label{corr}
\end{equation}
In words, (\ref{corr}) is
stating that for any two fixed points on the sphere, the correlation between
the standard needlet transforms of order $j$ at these two points is going to
zero nearly-exponentially (i.e., faster than any polynomials) as $j$
diverges. This uncorrelation property is equivalent to high-frequency
independence in the Gaussian case, and hence it makes possible the
implementation of a number of statistical procedures whose properties can be
rigorously established, in the high-frequency regime.

\subsection{Main Results}
As mentioned above, our plan in this paper is to introduce a further degree
of flexibility in the needlet construction, by allowing the scale width in
the multipole space to cover a much broader spectrum of possibilities than
in the existing literature. More precisely, as illustrated in the previous
Section in the standard needlet construction the $j-$order transform is
supported in the harmonic space over the interval $(B^{j-1},B^{j+1})$. There are several reasons, we believe, why it is of interest to consider
needlet-like transforms with more general support in the harmonic domain.
For instance, practitioners may be interested in multipoles ranging over
more general domains than $\ell \in (B^{j-1},B^{j+1})$ for physical reasons
related to their model of interests; otherwise, experimental settings may
put specific constraints on the multipoles on which needlet transforms can
be computed. Also, the range of values $(B^{j-1},B^{j+1})$ may simply be
considered to grow too rapidly for large values of $j$, and data
analysts/applied scientists may prefer to reduce it to achieve better
frequency-domain resolution in their analysis. These situations have
actually taken place, for instance, in the analysis of Cosmological data,
and it has been common to implement needlets on various multipole windows,
with no theoretical background to justify these choices, see e.g. \cite
{PlanckIS} and the references therein.

Our plan is then to consider needlet projectors of the following form:
\begin{equation}
{\Psi }_{j}\left( x,y\right) =\sum_{\ell \geq 0}b_{j}\left( \ell \right)
Z_{\ell ,d}\left( \left\langle x,y\right\rangle \right) \text{ ,}
\label{newfilter}
\end{equation}
where $\left\{ b_{j}\left( \cdot \right) \right\} _{j\in \mathbb{N}}$ is a
sequence of weight functions which generalize the sequence $\left\{ b\left(
\frac{\cdot }{B^{j}}\right) \right\} _{j\in \mathbb{N}}$ characterizing
standard needlets. To make our statements more precise, we will need some
more tools and notation; in particular, we need to introduce a \emph{scale
sequence} $\left\{ S_{j}\right\} _{j\in \mathbb{N}}$, that is, a growing
real-valued sequence such that the support of ${b}_{j}(\cdot )$ is included
in $\Lambda_j=[S_{j-1},S_{j+1}]$ for all $j\in \mathbb{N};$ we are therefore
implicitly maintaining the semi-orthogonality properties of standard
needlets, that is, the support of ${b}_{j}(\cdot )$ and ${b}_{j^{\prime
}}(\cdot )$ are disjoint whenever $\left\vert j-j^{\prime }\right\vert \geq
2.$ For notational simplicity, we shall always assume in the sequel that the
sequence $S_{j}-S_{j-1}$ is increasing, i.e.,
\begin{equation*}
S_{j}-S_{j-1}\leq S_{j+1}-S_{j}\text{ , for all }j\in \mathbb{N};
\end{equation*}
this will allow us to avoid some less elegant statement of results in terms
of the largest between ($S_{j+1}-S_{j})$ and ($S_{j}-S_{j-1})$ - the
substance of the approach is clearly unaltered. The other key ingredient in
the construction is a sequence of kernel functions $\left\{ b_{j}(\cdot)\right\}
_{j\in \mathbb{N}}$ on multipole space, depending on the sequence $\left\{
S_{j}\right\} _{j\in \mathbb{N}},$ for which we require the following
conditions:

\begin{assumption}
\label{ass:bj}
 The sequence of functions $\left\{ b_{j}(\cdot)\right\} $ is such

\begin{enumerate}
\item for all $n,j\in \mathbb{N}$
\begin{equation*}
|D^{(n)}b_{j}(u)|\leq K(n)\frac{1}{(S_{j}-S_{j-1})^{n}}\text{ ,}
\end{equation*}
where the constant $K(n)$ does not depend on $j$;

\item $b_{j}$ has a compact support in $\Lambda_j=\left[ S_{j-1},S_{j+1}\right] $,
with
\begin{equation*}
b_{j}\left( S_{j-1}\right) =b_{j}\left( S_{j+1}\right) =0\text{ , }
b_{j}\left( S_{j}\right) =S_{0}=1\text{ };
\end{equation*}

\item the \emph{partition of unity property} holds, that is,
\begin{equation*}
\sum_{j\geq 0}b_{j}^{2}\left( u\right) =1,\quad \text{for all }u\geq 1\text{
}.
\end{equation*}
\end{enumerate}
\end{assumption}

In the case of standard needlets, the sequence $\left\{ b_{j}(\cdot)\right\}_{j \in \mathbb{N}}$
can be obtained by scaling a function $b(\cdot)$, which is compactly supported
in $\left[B^{-1},B\right]$ for some $B>1$ and with bounded derivatives of any order. In
particular, in the standard construction we have
\begin{equation*}
b_{j}(u):=b\left(\frac{u}{B^{j}}\right)\text{ , }S_{j}:=B^{j}\text{ ,}
\end{equation*}
and hence
\begin{equation*}
|D^{(n)}b_{j}(u)|=\frac{1}{B^{nj}}\left\vert b^{(n)}\left(\frac{u}{B^{j}}
\right)\right\vert \leq \frac{1}{\left(B^{j}-B^{j-1}\right)^{n}}\sup_{u}\left\vert b^{(n)}\left(
\frac{u}{B^{j}}\right)\right\vert \text{ .}
\end{equation*}

The following localization property is the first main result of this paper:

\begin{theorem}[Localization property]
\label{prop:npw} As $j\rightarrow \infty ,$ for all $\theta \in (0,\pi ]$
and $M\in \mathbb{N}$, with $M>d$, there exists a constant $C_{M}>0$ (i.e.,
independent from $x,y$ and $j$ ) such that
\begin{equation}
\left\vert \Psi _{j}\left( \cos \theta \right) \right\vert \leq
C_{M}(S_{j+1}^{d}-S_{j-1}^{d})\max \left\{ \frac{1}{S_{j-1}^{2M}\theta ^{2M}}
,\frac{1}{\left( S_{j}-S_{j-1}\right) ^{2M}\theta ^{2M}}\right\} \text{ .}
\label{npw_loc}
\end{equation}
\end{theorem}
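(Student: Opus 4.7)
The plan is to follow the classical Narcowich-Petrushev-Ward localization argument, suitably adapted to the weaker smoothness of Assumption~\ref{ass:bj}. The key difference from the standard needlet proof is that the $n$-th derivative of $b_j$ is now controlled by $1/(S_j-S_{j-1})^n$ instead of the self-similar $1/B^{jn}$, and that $S_{j-1}$ and $S_j-S_{j-1}$ are in general independent scales; either one can be used to provide the gain per summation by parts, which is the source of the $\max$ in the statement.

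I would begin with the addition formula
\[
\Psi_j(\cos\theta) = \sum_{\ell = \lceil S_{j-1}\rceil}^{\lfloor S_{j+1}\rfloor} b_j(\ell)\, Z_{\ell,d}(\cos\theta),
\]
using that the trivial bound $|Z_{\ell,d}(\cos\theta)| \leq Z_{\ell,d}(1) = O(\ell^{d-1})$ together with the mean value theorem applied to $u\mapsto u^d$ yields $\sum_{\ell \in [S_{j-1},S_{j+1}]} \ell^{d-1} \lesssim S_{j+1}^d - S_{j-1}^d$. This accounts for the prefactor appearing in \eqref{npw_loc}. To produce the $\theta^{2M}$ denominator, I would then iterate a summation-by-parts argument based on the three-term recurrence for the Gegenbauer polynomials, according to which multiplication of $Z_{\ell,d}(\cos\theta)$ by $(\cos\theta - 1)$ acts as a second-order finite-difference operator in $\ell$. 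After $M$ iterations, with all boundary contributions vanishing thanks to $b_j(S_{j\pm 1})=0$ and Assumption~\ref{ass:bj}.2, one obtains
\[
(1-\cos\theta)^M \Psi_j(\cos\theta) = \sum_\ell (\Delta^{2M} b_j)(\ell)\, \tilde Z_\ell(\cos\theta),
\]
where $\tilde Z_\ell$ is of the same order of magnitude as $Z_{\ell,d}$. By Assumption~\ref{ass:bj}.1 combined with the mean value theorem, $|(\Delta^{2M} b_j)(\ell)| \leq C/(S_j-S_{j-1})^{2M}$, and $1-\cos\theta \geq c\theta^2$ on $(0,\pi]$, so
\[
|\Psi_j(\cos\theta)| \leq \frac{C\,(S_{j+1}^d - S_{j-1}^d)}{(S_j-S_{j-1})^{2M}\,\theta^{2M}},
\]
which is the second candidate inside the $\max$.

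The complementary bound $(S_{j-1}\theta)^{-2M}$ is obtained by a parallel argument that exploits the oscillatory decay of the Gegenbauer polynomials in the regime $\ell\theta \gg 1$: either via Hilb/Darboux-type asymptotics $G_\ell^{(\eta_d)}(\cos\theta) = O((\ell\sin\theta)^{-\eta_d}\ell^{\eta_d-1/2})$, or via a variant of the summation by parts in which the gain per iteration comes from the size $\ell \geq S_{j-1}$ on the support of $b_j$ rather than from the width $S_j-S_{j-1}$. Absorbing $M$ such gains yields the factor $(S_{j-1}\theta)^{-2M}$, and the theorem follows by taking the maximum of the two bounds (since establishing either estimate alone already implies the stated inequality). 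The main obstacle I expect is the careful bookkeeping of the summation-by-parts identities and of the resulting boundary terms: in the classical needlet setup one has $S_{j-1}\sim S_j-S_{j-1}\sim B^j$ and a single estimate suffices, whereas here the two scales are genuinely independent and two complementary arguments must be set up in parallel, with the vanishing of the boundary terms verified under the weaker smoothness furnished by Assumption~\ref{ass:bj}.
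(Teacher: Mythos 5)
Your skeleton coincides with the paper's: the trivial bound $|Z_{\ell;d}(\cos\theta)|\le C\ell^{d-1}$ plus Remark \ref{remark} for the prefactor $S_{j+1}^d-S_{j-1}^d$, and an iterated summation by parts driven by the Gegenbauer three-term recurrence (Lemma \ref{lemma:mayeli1}), with the resulting finite differences of $b_j$ controlled through the Mean Value Theorem and Assumption \ref{ass:bj} (Lemma \ref{UpsilonN}). However, there is a genuine gap in how you propose to produce the two terms of the $\max$. You treat them as two \emph{separately provable} complete bounds, claiming in particular that the summation by parts yields a pure difference operator, $(1-\cos\theta)^M\Psi_j=\sum_\ell(\Delta^{2M}b_j)(\ell)\tilde Z_\ell$, and hence the clean estimate $C(S_{j+1}^d-S_{j-1}^d)/((S_j-S_{j-1})\theta)^{2M}$. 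For $d\ge 2$ this is false: the recurrence gives the operator
\begin{equation*}
\Upsilon_d(\ell)=\tfrac{\ell}{2(\ell+\eta_d)}\,\Delta^-\Delta^+ +\tfrac{2\eta_d}{2(\ell+\eta_d)}\,\Delta^+ ,
\end{equation*}
whose first-order part carries a coefficient $O(1/\ell)$ but only \emph{one} difference of $b_j$, contributing $O\bigl(\ell^{-1}(S_j-S_{j-1})^{-1}\bigr)$ per iteration. When $S_j-S_{j-1}\gg S_{j-1}$ (which the flexible construction allows) this term dominates $(S_j-S_{j-1})^{-2}$, so your first candidate bound simply does not hold. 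Symmetrically, the pure $(S_{j-1}\theta)^{-2M}$ bound fails when $S_j-S_{j-1}\ll S_{j-1}$. The $\max$ is therefore not a disjunction of two theorems but the outcome of a \emph{single} argument: after $M$ applications of $\Upsilon_d$ one gets a sum of mixed terms of size $S_{j-1}^{-a}(S_j-S_{j-1})^{-(2M-a)}$, each dominated by $\max\{S_{j-1}^{-2M},(S_j-S_{j-1})^{-2M}\}$. Your parenthetical remark that ``establishing either estimate alone already implies the stated inequality,'' while formally true, hides the fact that neither estimate is establishable for a general scale sequence.

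The two alternative mechanisms you offer for the $(S_{j-1}\theta)^{-2M}$ term also would not work as described. Hilb/Darboux asymptotics give a decay of a single Gegenbauer polynomial of order $(\ell\sin\theta)^{-\eta_d}$, a \emph{fixed} power $(d-1)/2$ of $\ell\theta$, which cannot produce the arbitrary power $2M$ required. And there is no ``variant of the summation by parts in which the gain per iteration comes from the size $\ell\ge S_{j-1}$'': unit-step differences of $b_j$ are controlled by its derivatives, which scale with the support width $S_j-S_{j-1}$, never with $S_{j-1}$; the only source of $1/S_{j-1}$ factors is the coefficient $\upsilon_{0;d}(\ell)=O(1/\ell)$ in $\Upsilon_d$, and it arrives at most once per iteration, tied to a single $\Delta^+$. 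To repair the proof you should carry out the bookkeeping for the genuine operator $\Upsilon_d^M$ as in Lemmas \ref{lemma:mayeli1} and \ref{UpsilonN}, which is precisely where the paper's $\max$ comes from.
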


It is important to note that in the standard case (i.e., for $\left\{
S_{j}:=B^{j}\right\} _{j\in \mathbb{N}},$ some $B>1)$ the bound (\ref{npw_loc})
can be written as
\begin{eqnarray*}
\left\vert \Psi _{j}\left( \cos \theta \right) \right\vert &\leq
&C_{M}(B^{(j+1)d}-B^{(j-1)d})\max \left\{ \frac{1}{B^{(j-1)2M}\theta ^{2M}},
\frac{1}{\left( B^{j}-B^{j-1}\right) ^{2M}\theta ^{2M}}\right\} \\
&=&C_{M}(B-\frac{1}{B})B^{jd}\max \left\{ \frac{B^{2M}}{(B^{j}\theta )^{2M}},
\frac{(B/(B-1))^{2M}}{(B^{j}\theta )^{2M}}\right\} \\
&=&C_{M}^{\prime }\frac{B^{jd}}{(B^{j}\theta )^{2M}}\text{ ,}
\end{eqnarray*}
so that Theorem \ref{prop:npw} yields the estimate (\ref{StanLocProp})
which was established in the pioneering papers \cite{npw1,npw2}.

The system of \emph{flexible-bandwidth needlets} $\left\{ \psi _{j,k}\left(
\cdot \right) \right\} _{j,k}~$(or flexible needlets for short) can now be
defined, analogously to (\ref{needproj}), as $\psi _{j,k}\left( \cdot
\right) :\mathbb{S}^{d}\rightarrow \mathbb{R}$ such that
\begin{equation*}
\psi _{j,k}\left( \cdot \right) =\sqrt{\lambda _{j,k}}\sum_{\ell \geq
0}b_{j}\left( \ell \right) Z_{\ell ,d}\left( \left\langle \cdot ,\xi
_{j,k}\right\rangle \right) \text{ ,\ }j\geq 0,\text{ }k=1,\ldots ,K_{j},
\text{\ }
\end{equation*}%
$\left\{ \xi _{j,k},\lambda _{j,k}\right\} _{j\in \mathbb{N},k=1,\ldots,K_{j}}$ representing as before sets
of cubature points and weights such that
\begin{equation*}
\int_{\mathbb{S}^{2}}Y_{\ell, m}(x)\overline{Y}_{\ell ^{\prime },m^{\prime
}}(x)dx=\sum_{k=1}^{K_{j}}Y_{\ell, m}(\xi _{j,k})\overline{Y}_{\ell ^{\prime
},m^{\prime }}(\xi _{j,k})\lambda _{j,k}\text{ , for all }\ell ,\ell ^{\prime
}\leq S_{j+1}\text{ .}
\end{equation*}

We now focus on high-frequency uncorrelation of needlet coefficients; more
precisely, we investigate the corrrelation of the field (\ref{need_coef_cont})
, evaluated by means of (\ref{newfilter}). Assumption (\ref{power1})
requires a form of scale invariance of the angular power spectrum at very
large multipoles/very small scales. In applications, it is often the case
that power spectra may exhibit more complex behaviour, for instance with
sinusoidal oscillations as those which characterize the angular power
spectrum of Cosmic Microwave Background radiation (see \cite{PlanckPS}). In
the present paper, we hence extend and generalize the previous uncorrelation
results (\ref{corr}) considering a much broader class of angular power
spectra for random fields in $\mathbb{S}^{d}$; more precisely, we consider
power spectra taking the form

\begin{assumption}
\label{powspe}
The angular power spectrum satisfies $C_{\ell }=\ell
^{-\alpha }g(\ell )$, where $\alpha >2$, and the function $g:\mathbb{R}
^{+}\rightarrow $ $\mathbb{R}^{+}$ is such that
\begin{equation*}
g_{1}\leq g(u)\leq g_{2},\text{ for some }g_{2}\geq g_{1}>0
\end{equation*}
and for some $\beta \in \lbrack 0,1)$
\begin{equation*}
g^{(r)}(u)=O_{u\rightarrow \infty }(u^{-(1-\beta )r}),\text{ }g^{(r)}(u)=
\frac{d^{r}g(u)}{du^{r}}\text{ .}
\end{equation*}
\end{assumption}

For instance, Assumption \ref{powspe} covers angular power of the form
\begin{equation*}
C_{\ell }=\sum_{p=1}^{P}c_{p}\left\{ d_{p}+\sin (\ell ^{\beta
_{p}}/M_{p})\right\} \ell ^{-\alpha },\text{ }d_{p}>1\text{ , }c_{p},M_{p}>0
\text{ , }0<\beta _{p}<1\text{ for }p=1,...,P,\text{ }
\end{equation*}
thus exhibiting much richer oscillations than allowed in (\ref{power1}).

We now investigate uncorrelation properties in this broader framework, and
we establish our second main result.

\begin{theorem}[Uncorrelation property]
\label{prop:unco} Under Assumptions \ref{ass:bj} and \ref{powspe}, there
exists positive constants $C_{N}$ (depending on $\alpha ,d$ and $g(\cdot )),$
such that, as $j\rightarrow \infty $ we have
\begin{equation}
\mathrm{Corr}(\beta _{j}(x),\beta _{j}(y))
\leq C_{N}\times \max \left\{ \frac{1}{(S_{j-1}^{(1-\beta )}\theta )^{2N}},
\frac{1}{((S_{j}-S_{j-1})\theta )^{2N}}\right\} \text{ }.  \label{Corr3}
\end{equation}
\end{theorem}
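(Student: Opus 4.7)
My strategy follows the usual three-step pattern for correlation bounds on needlet coefficients. First, isotropy together with the spectral expansion (\ref{eq:exp}) and the addition formula yield
\begin{equation*}
\mathbb{E}[\beta_j(x)\beta_j(y)] \;=\; \sum_{\ell\geq 0} b_j^2(\ell)\,C_\ell\,Z_{\ell,d}(\cos\theta), \qquad \theta := d_{\mathbb{S}^d}(x,y),
\end{equation*}
so $\mathrm{Var}(\beta_j(x))$ is just the same series evaluated at $\theta=0$ and is independent of $x$; the correlation is then the ratio. For the denominator, a Taylor expansion using $b_j(S_j)=1$ and $\|b_j'\|_\infty\leq K(1)/(S_j-S_{j-1})$ from Assumption~\ref{ass:bj}(1) shows $b_j\geq 1/2$ on an interval of length $\asymp S_j-S_{j-1}$ around $S_j$; combining this with $Z_{\ell,d}(1)=N_{\ell,d}/\omega_d\asymp\ell^{d-1}$ and $C_\ell\geq g_1\ell^{-\alpha}$ from Assumption~\ref{powspe} yields the variance lower bound
\begin{equation*}
\mathrm{Var}(\beta_j(x)) \;\gtrsim\; S_j^{d-1-\alpha}\,(S_j-S_{j-1}).
\end{equation*}

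The crux is bounding the covariance by reworking the localization argument of Theorem~\ref{prop:npw}. I would introduce the effective kernel
\begin{equation*}
h_j(u) \;:=\; b_j^2(u)\,g(u)\,u^{-\alpha},
\end{equation*}
so that $\mathbb{E}[\beta_j(x)\beta_j(y)] = \sum_\ell h_j(\ell)\,Z_{\ell,d}(\cos\theta)$, and then verify that $h_j$ satisfies an analogue of Assumption~\ref{ass:bj}(1) with two competing rates. By Leibniz rule, using $|(b_j^2)^{(n)}|\lesssim (S_j-S_{j-1})^{-n}$ (from Assumption~\ref{ass:bj}(1)), $|g^{(n)}(u)|\lesssim u^{-(1-\beta)n}\asymp S_{j-1}^{-(1-\beta)n}$ (from Assumption~\ref{powspe}), and the trivial $|(u^{-\alpha})^{(n)}|\lesssim S_{j-1}^{-\alpha-n}$, one obtains for $u\in[S_{j-1},S_{j+1}]$
\begin{equation*}
|h_j^{(n)}(u)| \;\lesssim\; S_j^{-\alpha}\,\max\!\Bigl\{(S_j-S_{j-1})^{-n},\; S_{j-1}^{-(1-\beta)n}\Bigr\}.
\end{equation*}
Plugging $h_j$ into the Abel-summation / Christoffel--Darboux argument underlying Theorem~\ref{prop:npw}---which depends only on smoothness and compact support, not on the specific form of $b_j$---each of $2N$ iterations gains a factor $\theta^{-1}$ at the cost of differencing the kernel, producing
\begin{equation*}
|\mathbb{E}[\beta_j(x)\beta_j(y)]| \;\lesssim\; (S_{j+1}^d-S_{j-1}^d)\,S_j^{-\alpha}\,\max\!\Bigl\{\tfrac{1}{((S_j-S_{j-1})\theta)^{2N}},\;\tfrac{1}{(S_{j-1}^{1-\beta}\theta)^{2N}}\Bigr\}.
\end{equation*}

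Dividing the covariance bound by the variance lower bound leaves the prefactor $(S_{j+1}^d-S_{j-1}^d)/(S_j^{d-1}(S_j-S_{j-1}))$, which is bounded by a constant depending only on $d$ (using $S_{j+1}^d-S_{j-1}^d\leq d\,S_{j+1}^{d-1}(S_{j+1}-S_{j-1})$ together with the monotonicity $S_j-S_{j-1}\leq S_{j+1}-S_j$), and produces precisely (\ref{Corr3}). The principal difficulty is verifying that the proof of Theorem~\ref{prop:npw} survives the substitution of $b_j$ by the full product $h_j=b_j^2\,g\,u^{-\alpha}$; what makes this work is the precise form of Assumption~\ref{powspe}---the polynomial bound on $g^{(n)}$ at rate $u^{-(1-\beta)n}$ ensures that distributing $2N$ derivatives across $b_j^2\cdot g\cdot u^{-\alpha}$ via Leibniz produces two extremal cases (all derivatives on $b_j^2$, giving $(S_j-S_{j-1})^{-2N}$, or all on $g$, giving $S_{j-1}^{-(1-\beta)2N}$), while mixed and $u^{-\alpha}$-hitting terms are dominated by one of these two, leaving exactly the two-term maximum in (\ref{Corr3}).
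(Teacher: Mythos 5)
Your strategy coincides with the paper's own proof in every essential respect: the covariance is written as $\sum_{\ell}b_j^2(\ell)C_\ell Z_{\ell;d}(\cos\theta)$ via the addition formula; the Leibniz rule applied to $b_j^2(u)u^{-\alpha}g(u)$ yields the two-rate derivative bound $\lesssim \ell^{-\alpha}\max\{(S_j-S_{j-1})^{-n},S_{j-1}^{-(1-\beta)n}\}$; this is fed into the difference-operator localization machinery behind Theorem \ref{prop:npw} (Lemma \ref{lemma:mayeli1} and Lemma \ref{UpsilonN}); and the variance is bounded below by a first-order Taylor expansion around the multipole where $b_j=1$. So the architecture is exactly the paper's.

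The one step that does not close as written is the final cancellation. You claim that
\begin{equation*}
\frac{S_{j+1}^d-S_{j-1}^d}{S_j^{d-1}\left(S_j-S_{j-1}\right)}
\end{equation*}
is bounded by a constant depending only on $d$, invoking $S_{j+1}^d-S_{j-1}^d\le d\,S_{j+1}^{d-1}(S_{j+1}-S_{j-1})$ together with the monotonicity $S_j-S_{j-1}\le S_{j+1}-S_j$. But that monotonicity yields a \emph{lower} bound $(S_{j+1}-S_{j-1})/(S_j-S_{j-1})\ge 2$, not an upper bound, and neither $S_{j+1}/S_j$ nor $(S_{j+1}-S_j)/(S_j-S_{j-1})$ is bounded under the paper's hypotheses (take $S_j=2^{2^j}$: your prefactor then diverges like a power of $S_j$). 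The replacement of $\ell^{-\alpha}$ by $S_j^{-\alpha}$ rather than $S_{j-1}^{-\alpha}$ in the numerator suffers from the same comparability issue. The paper sidesteps all of this by bounding the numerator sum $\sum_{\ell\in\Lambda_j}\ell^{-\alpha}Z_{\ell;d}$ from above and the variance from below by the \emph{same} expression $\min\{(S_{j+1}-S_{j-1})S_{j-1}^{d-\alpha-1},\,S_{j-1}^{d-\alpha}\}$ (up to absolute constants), so the prefactors cancel identically with no assumption on the ratio of consecutive scales or increments. To repair your version you would either need to match the upper and lower bounds in that way, or add the hypothesis that consecutive scales and increments are comparable ($S_{j+1}\lesssim S_j$ and $S_{j+1}-S_j\lesssim S_j-S_{j-1}$), which holds for $S_j=B^j$ but is not part of Assumption \ref{ass:bj}.
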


As we discuss in the subsection below, this result generalize uncorrelation
properties in the literature even in the standard needlet case $S_{j}=B^{j}$
for $j=1,2,...,$ and hence we believe it can have considerable importance
for applications.

\subsection{Discussion}

Some remarks are in order:

\begin{itemize}
\item Given the localization result established in Theorem \ref{prop:npw},
and the details of the construction of the needlet kernel, it can be easily
verified that flexible needlets form a \emph{tight frame} and they
allow for exact reconstruction formulae. More formally, for all $f\in L^{2}(
\mathbb{S}^{d})$ it is standard to show that the corresponding needlet
coefficients satisfy
\begin{eqnarray*}
\sum_{j\in \mathbb{N}}\sum_{k=1}^{K_j}\beta _{j,k}^{2} &=&\sum_{j\in \mathbb{N}}\sum_{k=1}^{K_j}\lambda _{j,k}\left[ \sum_{\ell \in
\Lambda_j}b_{j}(\ell )a_{\ell, m}Y_{\ell ,m}(\xi _{j,k})\right]
^{2} \\
&=&\sum_{j\in\mathbb{N}}\sum_{\ell _{1},\ell _{2}\in
\Lambda_j}\sum_{m_{1},m_{2}=1}^{N_{\ell;d}}b_{j}(\ell _{1})b_{j}(\ell _{2})a_{\ell
_{1},m_{1}}\overline{a}_{\ell _{2},m_{2}}\\ &&\times\sum_{k=1}^{K_j}\lambda _{j,k}Y_{\ell _{1},m_{1}}(\xi _{j,k})
\overline{Y}_{\ell _{2},m_{2}}(\xi _{j,k}) \\
&=&\sum_{j\in\mathbb{N}} \sum_{\ell _{1},\ell _{2}\in
\Lambda_j}\sum_{m_{1},m_{2}=1}^{N_{\ell;d}}b_{j}(\ell _{1})b_{j}(\ell _{2})a_{\ell
_{1},m_{1}}\overline{a}_{\ell _{2},m_{2}}\delta _{\ell _{1}}^{\ell _{2}}\delta
_{m_{1}}^{m_{2}} \\
&=&\sum_{\ell \in \mathbb{N}}\sum_{m=1}^{N_{\ell;d}}\sum_{j\in \mathbb{N}}b_{j}^{2}(\ell )\left\vert a_{\ell ,m}\right\vert
^{2}=\sum_{\ell \in \mathbb{N}}\sum_{m=1}^{N_{\ell;d}}\left\vert a_{\ell, m}\right\vert ^{2}=\left\Vert
f\right\Vert _{L^{2}(\mathbb{S}^{d})}^{2}\text{ .}
\end{eqnarray*}
Likewise, it can be shown that the following reconstruction formula holds:
\begin{equation*}
f(\cdot )=\sum_{j\in \mathbb{N}}\sum_{k=1}^{K_j}\beta _{j,k}\psi _{j,k}(\cdot )\text{ in }L^{2}(\mathbb{S}
^{d})\text{ .}
\end{equation*}
The details here are identical to those in the seminal papers by \cite{npw1, npw2}, and are therefore omitted for brevity's sake.

\item In the standard needlet case and under \eqref{power1}, \eqref{Corr3}
leads to the following bound:
\begin{equation*}
C_{N}\times \max \left\{ \frac{1}{(B^{(j-1)(1-\beta )}\theta )^{2N}},\frac{
1}{((B^{j}-B^{j-1})\theta )^{2N}}\right\}
\leq \frac{C_{N}}{(B^{(j-1)(1-\beta )}\theta )^{2N}}\text{ .}
\end{equation*}
As mentioned above, this result generalizes to all $\beta \in \lbrack 0,1)$
the uncorrelation bound for needlet coefficients which was given for $\beta
=0$ by \cite{bkmpAoS} and then exploited in a number of subsequent papers to
construct statistical procedures with an asymptotic justification, in the
high-frequency sense.

\item In the general case, asymptotic uncorrelation can continue to hold for
$\beta >0$, but the upper bound is less and less efficient as $\beta $
grows; indeed assuming that $S_{j-1}^{1-\beta }<(S_{j}-S_{j-1})$, for all $
\theta \in \lbrack 0,\pi ]$ we get
\begin{equation*}
\mathrm{Corr}(\beta _{j}(x),\beta _{j}(y))\leq \max \left\{ 1,\frac{C_{N}}{
(S_{j-1}^{1-\beta }\theta )^{2N}}\right\} \text{ .}
\end{equation*}
It should be noted that in the discretized case the construction of cubature
points is such that their minimum distance decays as
\begin{equation*}
d_{j}:=\min_{k,k^{\prime }}d_{\mathbb{S}^{2}}(\xi _{jk},\xi _{jk^{\prime }})\simeq
S_{j-1}^{-1}\text{ },\text{ as }j\rightarrow \infty \text{ .}
\end{equation*}
For $\beta =0,$ it then follows that needlet coefficients have correlations
decaying to zero (as $j\rightarrow \infty $) when evaluated on any pair of
locations whose distance decays more slowly than $d_{j}.$ This is no longer
the case for less regular power spectra: indeed for $\beta >0$ to ensure
asymptotic uncorrelation we must consider pair of coefficients whose
distance is fixed or decays to zero more slowly than $d_{j}^{1-\beta }$ .
\end{itemize}

\subsection{Some Simple Applications}

The uncorrelation properties of spherical needlets have allowed for an
enormous amount of applications in statistical inference in the last few
years, among which we mention subsampling techniques (\cite{BKMPBer}),
Whittle estimation of the model parameters (\cite{DLM13}), point source
detection (\cite{Cheng}), testing for isotropy (\cite{PlanckIS}), and many
others. For brevity's sake we do not develop these applications in the
broader framework considered in this paper; we just include a simple examples on goodness of fit testing.

As it is often the case in the analysis (for
instance) of CMB data, we assume a Gaussian isotropic random field $\left\{
f(.)\right\} $ is observed on a region $D\subset \mathbb{S}^{2}$, and out of
the observations in this region we need to check goodness of fit for some
given model for the angular power spectrum, $\left\{ C_{\ell }=C_{\ell
}(\theta )\right\} _{\ell \in \mathbb{N}}.$ For any $j \in \mathbb{N}$, let $\Xi_j$ denote the grid of  cubature points $\left\{\xi_{j,k}\right\}_{k=1,\ldots,K_j}$. Consider the following testing
procedure:

a) take a needlet construction such that $S_{j-1}^{1-\beta }<(S_{j}-S_{j-1})$, for $j=1,2,\ldots,$ so that we impose a lower bound on the width of $
(S_{j}-S_{j-1})$ (i.e., $(S_{j}-S_{j-1})/S_{j-1}$ can shrink to zero, but $\{(S_{j}-S_{j-1})/S_{j-1}^{1-\beta }\}$ cannot); compute the needlet
coefficients $\left\{ \beta _{j,k}\right\} _{k=1,\ldots,K_j}$

b) choose a subset $D_j$ of these coefficients such that, for $k \in D_j$, $\xi_{j,k} \in \Xi_{j}\cap D,$
and for all $k,k^{\prime } \in D_{j}$ one has $d_{\mathbb{S}^{2}}(\xi
_{j,k},\xi _{j,k^{\prime }})>\delta /S_{j-1}^{1-\beta -\varepsilon },$ for $\delta
,\varepsilon >0,$ and at the same time $\mathrm{card}\left\{ D_{j}\right\}
\rightarrow \infty $ as $j\rightarrow \infty $ (the elements of $D_{j}$ can
be viewed as a subsampling of the cubature points in the grid $\Xi _{j}$ with some constraints on their distance).
Note that for all $M>0,$ there exist $C_{M}$ such that
\begin{equation*}
\mathrm{Corr}(\beta _{j,k},\beta _{j,k^{\prime }})\leq \frac{C_{M}}{(S_{j-1}^{1-\beta
}d_{\mathbb{S}^{2}}(\xi _{j,k},\xi _{j,k^{\prime }}))^{M}}\leq C_{M}\delta
^{-M}S_{j-1}^{-M\varepsilon }\text{ for all }j\in \mathbb{N},
\end{equation*}
hence in particular $\mathrm{Corr}(\beta _{j,k},\beta _{j,k^{\prime }})\rightarrow 0$
as $j\rightarrow \infty $ for all $k,k^{\prime }\in D_{j}$ .

c) now compute
\begin{eqnarray*}
I_{j} =\frac{1}{\sqrt{2\mathrm{card}\left\{ D_{j}\right\} }}\sum_{k\in
D_{j}}\left\{ \frac{\beta _{j,k}^{2}}{\mathbb{E}\left\{ \beta
_{j,k}^{2}\right\} }-1\right\} \text{ , where }
\mathbb{E}\left\{ \beta _{j,k}^{2}\right\} =\sum_{\ell \in
\Lambda_j}b_{j}^{2}(\ell )\frac{2\ell +1}{4\pi }C_{\ell }\text{ .}
\end{eqnarray*}
It is immediate to see that $\mathbb{E}\left\{ I_{j}\right\} =0$ and
\begin{eqnarray*}
\mathrm{Var}\left\{ I_{j}\right\} &=&\frac{1}{2\mathrm{card}\left\{ D_{j}\right\} }\sum_{k\in
D_{j}}\mathrm{Var}\left\{ \frac{\beta _{j,k}^{2}}{\mathbb{E}\left\{ \beta
_{j,k}^{2}\right\} }\right\} \\
&&+\frac{1}{2\mathrm{card}\left\{ D_{j}\right\} }\sum_{k,k^{\prime }\in D_{j},k\neq
k^{\prime }}\mathrm{Cov}\left\{ \frac{\beta _{j,k}^{2}}{\mathbb{E}\left\{ \beta
_{j,k}^{2}\right\} },\frac{\beta _{j,k^{\prime }}^{2}}{\mathbb{E}\left\{ \beta
_{j,k^{\prime }}^{2}\right\} }\right\} \\
&=&1+A_{j}
\end{eqnarray*}
where, using the Diagram (Wick's) Formula (see \cite{NouPebook}, p. 202)
\begin{eqnarray*}
A_{j} &=&\frac{1}{2\mathrm{card}\left\{ D_{j}\right\} }\sum_{k,k^{\prime }\in
D_{j},k\neq k^{\prime }}\mathrm{Cov}\left\{ \frac{\beta _{j,k}^{2}}{\mathbb{E}\left\{
\beta _{j,k}^{2}\right\} },\frac{\beta _{j,k^{\prime }}^{2}}{\mathbb{E}\left\{
\beta _{j,k^{\prime }}^{2}\right\} }\right\} \\
&=&\frac{1}{\mathrm{card}\left\{ D_{j}\right\} }\sum_{k,k^{\prime }\in D_{j},k\neq
k^{\prime }}\mathrm{Corr}^{2}\left\{ \frac{\beta _{j,k}}{\sqrt{\mathbb{E}\left\{ \beta
_{j,k}^{2}\right\} }},\frac{\beta _{j,k^{\prime }}}{\sqrt{\mathbb{E}\left\{
\beta _{j,k^{\prime }}^{2}\right\} }}\right\} \\
&\leq &\mathrm{card}\left\{ D_{j}\right\} \times C_{M}^{2}\delta
^{-2M}S_{j-1}^{-2M\varepsilon }\rightarrow 0\text{ , as }j\rightarrow \infty
\text{ ,}
\end{eqnarray*}
by recalling $\mathrm{card}\left\{ D_{j}\right\} =O(S_{j})$ and choosing $M$ such
that $S_{j}=o(S_{j-1}^{2M\varepsilon })$. We have thus shown that $
\lim_{j\rightarrow \infty }\mathrm{Var}\left( I_{j}\right)=1$ .

d) finally, it is now a standard computation to show that
\begin{eqnarray*}
\mathrm{Cum}_{4}\left\{ I_{j}\right\} &=&\frac{1}{\left\{ 2\mathrm{card}\left\{ D_{j}\right\}
\right\} ^{2}}\mathrm{Cum}_{4}\left\{ \sum_{k\in D_{j}}\left\{ \frac{\beta _{j,k}^{2}}{
\mathbb{E}\left\{ \beta _{j,k}^{2}\right\} }-1\right\} \right\} \\
&=&\frac{1}{\left\{ 2\mathrm{card}\left\{ D_{j}\right\} \right\} ^{2}}
\end{eqnarray*}
\begin{equation*}
\times O\left\{ \sum_{k_{1},k_{2},k_{3},k_{4}\in D_{j}}\mathrm{Corr}^{2}\left\{ \frac{
\beta _{j,k_{1}}^{2}}{\mathbb{E}\left\{ \beta _{j,k_{1}}^{2}\right\} },\frac{
\beta _{j,k_{2}}^{2}}{\mathbb{E}\left\{ \beta _{j,k_{3}}^{2}\right\} }\right\}
...\mathrm{Corr}^{2}\left\{ \frac{\beta _{j,k_{4}}^{2}}{\mathbb{E}\left\{ \beta
_{j,k_{4}}^{2}\right\} },\frac{\beta _{j,k_{2}}^{2}}{\mathbb{E}\left\{ \beta
_{j,k_{1}}^{2}\right\} }\right\} \right\}
\end{equation*}

\begin{equation*}
=O\left( \frac{1}{\mathrm{card}\left\{ D_{j}\right\} }\right) \text{ .}
\end{equation*}
It is then an immediate application of the Malliavin-Stein method (see \cite
{NouPebook} and the references therein) to prove that a (quantitative)
Central Limit Theorem holds for the sequence $\left\{ I_{j}\right\}_{j \in \mathbb{N}} ,$ thus
making well-principled goodness of fit tests available.

In a similar manner, under these broader circumstances extensions can be
implemented for needlet based-procedures in a number of areas of theoretical
and applied interest: we mention for instance high-frequency maximum
likelihood estimates (as investigated by \cite{DLM13} in the standard
needlet case), polyspectra estimation (see e.g., \cite{Cammarota}), isotropy
testing (see \cite{PlanckIS}), power spectrum estimation (see \cite{BKMPBer,PlanckPS}), point source detection (\cite{carron,Cheng}) and
many others. For brevity's sake, we do not discuss the implementation
details here.

\subsection{Plan of the Paper}

The properties of flexible-bandwidth needlets in terms of localization in
real space are discussed in Section \ref{Localization}, while uncorrelation
properties are investigated in Section \ref{Uncorrelation}; an explicit
construction for the sequence of kernels $\left\{ b_{j}(\cdot )\right\}
_{j\in \mathbb{N}}$ is given in the Appendix (Section \ref{Appendix}).

\subsection{Acknowledgments}
The research by CD was supported by \lq\lq Bando di Ateneo Sapienza" RM120172B7A31FFA- Costruzione di basi multiscala e trasformate wavelet per applicazioni in ambito numerico e statistico. The research by DM was partially
supported by the MIUR Departments of Excellence Program Math@Tov, CUP
E83C18000100006. The research by APT was partially supported by Progetto di
Eccellenza, Dipartimento di Scienze Matematiche, Politecnico di Torino, CUP:
E11G18000350001. APT and CD have
been also partially supported by the German Research Foundation (DFG) via RTG 2131.

\section{Localization Properties \label{Localization}}

In this section we will establish a localization property which generalizes
analogous results for standard needlets in \cite{npw1}, Mexican needlets in
\cite{dur,gm2} and scale-directional wavelets in \cite{mcewen}.

Let us first recall some useful notation. Consider a real-valued sequence $
\left\{ r_{\ell }: \ell \geq 0\right\} $ and let the \emph{discrete
difference operators} $\Delta ^{+}$, $\Delta ^{-}$ be defined by
\begin{equation*}
\Delta ^{+}r_{\ell }=r_{\ell +1}-r_{\ell }\text{ , \ }\Delta ^{-}r_{\ell
}=r_{\ell }-r_{\ell -1}.
\end{equation*}
These operators can be viewed as discrete versions of derivation on
sequences (see also \cite[Definition 2.1]{mayeli}), and can be used to
define
\begin{equation*}
\Upsilon _{d}\left( \ell \right) =\upsilon _{1;d}\left( \ell \right) \Delta
^{-}\Delta ^{+}+\upsilon _{0;d}\left( \ell \right) \Delta ^{+},\text{ }d\geq
2\text{ ,}
\end{equation*}
where
\begin{eqnarray*}
\upsilon _{1,d}\left( \ell \right) &:=&\frac{\ell }{2\left( \ell +\eta
_{d}\right) }=\frac{\ell }{2\ell +d-1}=\frac{1}{2}-\frac{d-1}{4\ell +2d-2}
\text{ },\quad \\
\upsilon _{0,d}\left( \ell \right) &:=&\frac{2\eta _{d}}{2\left( \ell +\eta
_{d}\right) }=\frac{d-1}{2\ell +d-1}\leq \frac{d-1}{2\ell }\text{ }.
\end{eqnarray*}
Our main result is the following.

\begin{proposition}[Localization]
\label{lemma:mayeli2} Let $\Psi _{j}(\cdot)$ be defined as
\begin{equation*}
\Psi _{j}\left( \cos \theta \right) :=\sum_{\ell \in \Lambda_j} b_{j}\left( \ell \right) Z_{\ell ;d}\left( \cos \theta \right) \text{ },
\text{ }j\in \mathbb{N},
\end{equation*}
where for all $M>0$ there exists a positive constant $C_{M}>0$ such that
\begin{equation}
\left( \Delta ^{-}\right) ^{M}\left( \Delta ^{+}\right) ^{M}b_{j}\left( \ell
\right) \leq C_{M}\frac{1}{\left( S_{j}-S_{j-1}\right) ^{2M}}.
\label{eq:deltaplus}
\end{equation}
Then, it holds that
\begin{equation*}
\left\vert (\cos \theta -1)^{M}\Psi _{j}\left( \cos \theta \right)
\right\vert \leq C_{M}\left( S_{j+1}^{d}-S_{j-1}^{d}\right) \max \left\{
\frac{1}{S_{j-1}^{2M}},\frac{1}{\left( S_{j}-S_{j-1}\right) ^{2M}}\right\}
\end{equation*}
and hence, because $\theta ^{2}=O(\left\vert \cos \theta -1\right\vert )$
for $\theta \in (0,\pi)$

\begin{eqnarray*}
\left\vert \Psi _{j}\left( \cos \theta \right) \right\vert &\leq&
C_{M}(S_{j+1}^{d}-S_{j-1}^{d})\max \left\{ \frac{1}{S_{j-1}^{2M}\theta
^{2M}},\frac{1}{\left( S_{j}-S_{j-1}\right) ^{2M}\theta ^{2M}}\right\}.\notag
\end{eqnarray*}
\end{proposition}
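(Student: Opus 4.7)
The plan is to prove the bound by iterated summation by parts, following the Narcowich--Petrushev--Ward strategy \cite{npw1} and its extension to arbitrary dimension by \cite{mayeli}, now adapted to the flexible bandwidth $\{S_j\}_{j\in\mathbb{N}}$. The operator $\Upsilon_d$ introduced just before the proposition is precisely designed to be the discrete adjoint of multiplication by $(1-\cos\theta)$ on the Gegenbauer side.

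First, I would record the one-step summation-by-parts identity. Starting from the standard three-term recurrence for $G_\ell^{(\eta_d)}$ rewritten in terms of the zonal kernels $Z_{\ell;d}$, one checks directly that for any finitely supported sequence $\{a_\ell\}$,
\begin{equation*}
(1-\cos\theta)\sum_{\ell\geq 0} a_\ell\, Z_{\ell;d}(\cos\theta)
 = \sum_{\ell\geq 0}(\Upsilon_d a)(\ell)\, Z_{\ell;d}(\cos\theta).
\end{equation*}
Applying this identity $M$ times with $a_\ell=b_j(\ell)$ yields
\begin{equation*}
(1-\cos\theta)^M \Psi_j(\cos\theta) \;=\; \sum_{\ell\in\Lambda_j} \bigl(\Upsilon_d^M b_j\bigr)(\ell)\, Z_{\ell;d}(\cos\theta),
\end{equation*}
so the problem reduces to a pointwise estimate of $|(\Upsilon_d^M b_j)(\ell)|$ on the support $\Lambda_j=[S_{j-1},S_{j+1}]$.

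Next, I would expand $\Upsilon_d^M=\bigl(\upsilon_{1;d}\,\Delta^-\Delta^+ + \upsilon_{0;d}\,\Delta^+\bigr)^M$ into at most $2^M$ terms. A typical term in the expansion carries, for some $k\in\{0,\ldots,M\}$, exactly $k$ factors $\upsilon_{0;d}(\cdot)\Delta^+$ and $M-k$ factors $\upsilon_{1;d}(\cdot)\Delta^-\Delta^+$, hence overall $2M-k$ differences applied to $b_j$ (together with lower-order commutator corrections coming from the $\ell$-dependence of $\upsilon_{0;d},\upsilon_{1;d}$, which are of the same type and strictly better). Using $|\upsilon_{1;d}|\leq 1/2$, $|\upsilon_{0;d}(\ell)|\leq (d-1)/(2S_{j-1})$ for $\ell\in\Lambda_j$, together with the hypothesis \eqref{eq:deltaplus} in the symmetric case and the analogous bound $|(\Delta^-)^a(\Delta^+)^b b_j(\ell)|\leq C_{a,b}/(S_j-S_{j-1})^{a+b}$ (which follows from Assumption~\ref{ass:bj}(1) via a discrete mean-value argument, each $\Delta^\pm$ being controlled by $D$ on scale $S_j-S_{j-1}$), one gets
\begin{equation*}
\bigl|(\Upsilon_d^M b_j)(\ell)\bigr|\;\leq\;C_M\sum_{k=0}^M \frac{1}{S_{j-1}^{k}\,(S_j-S_{j-1})^{2M-k}}
\;\leq\;C_M'\max\Bigl\{\tfrac{1}{S_{j-1}^{2M}},\tfrac{1}{(S_j-S_{j-1})^{2M}}\Bigr\}.
\end{equation*}

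Finally, I would use the elementary bound $|Z_{\ell;d}(\cos\theta)|\leq Z_{\ell;d}(1)=N_{\ell;d}/\omega_d=O(\ell^{d-1})$ and sum over $\ell\in\Lambda_j$, getting $\sum_{\ell=S_{j-1}}^{S_{j+1}} \ell^{d-1}\leq C(S_{j+1}^d-S_{j-1}^d)$. Combining gives the first displayed bound; the $\theta$-version follows from $1-\cos\theta\geq (2/\pi^2)\theta^2$ on $(0,\pi]$. The main obstacle is the bookkeeping in the middle step: the coefficients $\upsilon_{i;d}(\ell)$ depend on $\ell$ and therefore do not commute with the difference operators, so rearranging $\Upsilon_d^M$ into a clean sum generates commutator terms. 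These are of the form $[\upsilon_{i;d},\Delta^\pm]\sim (1/\ell^2)$ and produce corrections that are strictly smaller than the leading summand already considered, so they are absorbed into the $\max$ above without affecting the final estimate.
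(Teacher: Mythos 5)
Your proposal is correct and follows essentially the same route as the paper: the summation-by-parts identity you state first is exactly the paper's Lemma \ref{lemma:mayeli1}, your bound on $\bigl|(\Upsilon_d^M b_j)(\ell)\bigr|$ (including the conversion of discrete differences into derivatives of $b_j$ via a mean-value argument and the $\max$ over the mixed terms $S_{j-1}^{-k}(S_j-S_{j-1})^{-(2M-k)}$) is the content of Lemma \ref{UpsilonN} and the remark following it, and the final summation $\sum_{\ell\in\Lambda_j}\ell^{d-1}=O(S_{j+1}^d-S_{j-1}^d)$ is Remark \ref{remark}. No substantive differences to report.
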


The proof of the previous results requires the following two lemmas, which
are generalizations to $\mathbb{S}^{d}$ of \cite[Lemma 4.1]{mayeli}, where $
\mathbb{S}^{2}$ was considered.

\begin{lemma}
\label{lemma:mayeli1}Let
\begin{equation*}
q\left( \cos \theta \right) :=\sum_{\ell \geq 0}r_{\ell }\frac{\left( \ell
+\eta _{d}\right) }{\eta _{d}\omega _{d}}G_{\ell }^{\left( \eta _{d}\right)
}\left( \cos \theta \right) =\sum_{\ell \geq 0}r_{\ell }Z_{\ell ;d}\left(
\cos \theta \right) , 
\end{equation*}
where $\left\{ r_{\ell}: \ell \geq 0 \right\}$  is a real-valued sequence.
Then, for any $N\in \mathbb{N}$,
\begin{equation}
\left( \cos \theta -1\right) ^{N}q\left( \cos \theta \right) =\sum_{\ell
\in \mathbb{N}}r_{\ell ;d}^{\left( N\right) }Z_{\ell ;d}\left( \cos \theta \right) ,
\label{eq:lemma1}
\end{equation}
where $r_{\ell ;d}^{\left( N\right) }:=\Upsilon _{d}^{N}\left( \ell \right)
r_{\ell }.$
\end{lemma}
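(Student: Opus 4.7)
The plan is to establish the identity (\ref{eq:lemma1}) first for $N=1$ and then iterate. The key algebraic input is the Gegenbauer three-term recurrence
\[
2(\ell+\eta_d)\cos\theta\, G_\ell^{(\eta_d)}(\cos\theta)=(\ell+1)G_{\ell+1}^{(\eta_d)}(\cos\theta)+(\ell+2\eta_d-1)G_{\ell-1}^{(\eta_d)}(\cos\theta),
\]
which, after multiplication by $\frac{1}{\eta_d\omega_d}$ and re-expression in terms of $Z_{\ell;d}=\frac{\ell+\eta_d}{\eta_d\omega_d}G_{\ell}^{(\eta_d)}$, should give
\[
\cos\theta\, Z_{\ell;d}(\cos\theta)=\upsilon_{1,d}(\ell+1)Z_{\ell+1;d}(\cos\theta)+\bigl[\upsilon_{1,d}(\ell-1)+\upsilon_{0,d}(\ell-1)\bigr]Z_{\ell-1;d}(\cos\theta),
\]
where I exploit the two elementary identities $\upsilon_{1,d}(\ell)+\upsilon_{0,d}(\ell)=\frac{\ell+d-1}{2\ell+d-1}$ and $2\upsilon_{1,d}(\ell)+\upsilon_{0,d}(\ell)=1$. (The second identity is what ties the definition of $\Upsilon_d$ to the recurrence.) Subtracting $Z_{\ell;d}$ on both sides yields the effect of multiplication by $(\cos\theta-1)$ on a single kernel.

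The second step is Abel summation / re-indexing. Writing
\[
(\cos\theta-1)\sum_{\ell\geq 0} r_\ell Z_{\ell;d}(\cos\theta)=\sum_{\ell\geq 0}r_\ell\bigl[\upsilon_{1,d}(\ell+1)Z_{\ell+1;d}-Z_{\ell;d}+(\upsilon_{1,d}(\ell-1)+\upsilon_{0,d}(\ell-1))Z_{\ell-1;d}\bigr],
\]
I shift the index in the first and third sums so that every term carries $Z_{\ell;d}$. Collecting, the coefficient of $Z_{\ell;d}$ becomes
\[
\upsilon_{1,d}(\ell)r_{\ell-1}-r_\ell+\bigl[\upsilon_{1,d}(\ell)+\upsilon_{0,d}(\ell)\bigr]r_{\ell+1},
\]
and replacing $1=2\upsilon_{1,d}(\ell)+\upsilon_{0,d}(\ell)$ in the middle term one regroups this as
\[
\upsilon_{1,d}(\ell)(r_{\ell+1}-2r_\ell+r_{\ell-1})+\upsilon_{0,d}(\ell)(r_{\ell+1}-r_\ell)=\upsilon_{1,d}(\ell)\Delta^{-}\Delta^{+}r_\ell+\upsilon_{0,d}(\ell)\Delta^{+}r_\ell=\Upsilon_d(\ell)r_\ell.
\]
This gives (\ref{eq:lemma1}) for $N=1$.

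For general $N$ one then iterates: applying the $N=1$ identity to the sequence $\{\Upsilon_d^{k}(\ell)r_\ell\}$ successively and using that the operator $\Upsilon_d$ acts in the multipole variable, one obtains $(\cos\theta-1)^N q(\cos\theta)=\sum_\ell \Upsilon_d^N(\ell)r_\ell\, Z_{\ell;d}(\cos\theta)$. Boundary terms at $\ell=0$ cause no trouble: $\upsilon_{1,d}(0)=0$ kills the unwanted $Z_{-1;d}$ contribution produced by the reindexing, and the compact support of $b_j$ makes all sums finite after a suitable truncation, so the rearrangements are justified with no convergence issues.

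The main obstacle is really bookkeeping: matching the shifted coefficients coming out of the Gegenbauer recurrence with the exact combinations $\upsilon_{1,d}$ and $\upsilon_{0,d}$, and verifying the two identities relating them; once those are in hand the rest is summation by parts and induction.
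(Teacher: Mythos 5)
Your proposal is correct and follows essentially the same route as the paper: both arguments rest on the Gegenbauer three-term recurrence (the paper quotes the $(x-1)$-form of it directly from Abramowitz--Stegun, while you derive it by subtracting $Z_{\ell;d}$ from the plain recurrence), followed by the same index shift, the regrouping of the coefficient of $Z_{\ell;d}$ into $\upsilon_{1,d}(\ell)\Delta^{-}\Delta^{+}r_\ell+\upsilon_{0,d}(\ell)\Delta^{+}r_\ell=\Upsilon_d(\ell)r_\ell$, and iteration for general $N$. Your handling of the boundary terms (via $\upsilon_{1,d}(0)=0$ together with the conventions $Z_{-1;d}=0$, $r_{-1}=0$) matches the paper's, so there is nothing to add.
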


\begin{proof}[Proof of Lemma \protect\ref{lemma:mayeli1}]
	Recall first the identity, valid for $x\in \lbrack -1,1],$ $\ell \in \mathbb{
		N}_{0}$
	\begin{eqnarray*}
		&&(x-1)\left[ 2(\ell +\eta _{d})G_{\ell }^{\left( \eta _{d}\right) }\left(
		x\right) \right] \\
		&&\quad \quad =\left( \ell +1\right) G_{\ell +1}^{\left( \eta _{d}\right)
		}\left( x\right) -2\left( \ell +2\eta _{d}\right) G_{\ell }^{\left( \eta
			_{d}\right) }\left( x\right) +\left( \ell +2\eta _{d}-1\right) G_{\ell
			-1}^{\left( \eta _{d}\right) }\left( x\right) ,
	\end{eqnarray*}
see \cite[Equation 22.7.3]{handbook}. With the convention $G_{-1}^{\left(
		\eta _{d}\right) }\left( x\right) =0$ for any $x\in \left[ -1,1\right] $, $
	r_{-1}=0$, and writing $Z_{\ell }(\cos \theta )=2(\ell +\eta _{d})G_{\ell
	}^{\left( \eta _{d}\right) }\left( \cos \theta \right) $, we have
	\begin{align*}
	\sum_{\ell \geq 0}& r_{\ell }\left[ \left( x-1\right) Z_{\ell }\left(
	x\right) \right] \\
	& =\sum_{\ell \geq 0}r_{\ell }\left[ \frac{\ell +1}{2\left( \left( \ell
		+1\right) +\eta _{d}\right) }Z_{\ell +1}\left( x\right) -Z_{\ell }\left(
	x\right) +\frac{\ell +2\eta _{d}-1}{2\left( \left( \ell -1\right) +\eta
		_{d}\right) }Z_{\ell -1}\left( x\right) \right] \\
	& =\sum_{\ell \geq 1}r_{\ell -1}\frac{\ell }{2\left( \ell +\eta _{d}\right) }
	Z_{\ell }(x)-\sum_{\ell \geq 0}r_{\ell }Z_{\ell }(x)+\sum_{\ell \geq
		-1}r_{\ell +1}\frac{\ell +2\eta _{d}}{2\left( \ell +\eta _{d}\right) }
	Z_{\ell }\left( x\right) \\
	& =\sum_{\ell \geq 0}\left[ \frac{\ell }{2\left( \ell +\eta _{d}\right) }
	r_{\ell -1}-\frac{2\left( \ell +\eta _{d}\right) }{2\left( \ell +\eta
		_{d}\right) }r_{\ell }+\frac{\ell +2\eta _{d}}{2\left( \ell +\eta
		_{d}\right) }r_{\ell +1}\right] Z_{\ell }\left( x\right) \\
	& =\sum_{\ell \geq 0}\left[ \frac{\ell }{2\left( \ell +\eta _{d}\right) }
	\left( r_{\ell -1}-2r_{\ell }+r_{\ell +1}\right) +\frac{2\eta _{d}}{2\left(
		\ell +\eta _{d}\right) }\left( r_{\ell +1}-r_{\ell }\right) \right] Z_{\ell
	}\left( x\right) \\
	& =\sum_{\ell \geq 0}r_{\ell }^{\left( 1\right) }Z_{\ell }\left( x\right) .
	\end{align*}
	Now, fixing $x=\cos \theta $ and dividing by $2\eta _{d}\omega _{d}$, we
	obtain that
	\begin{equation*}
	\left( \cos \theta -1\right) q\left( \cos \theta \right) =\sum_{\ell \geq
		0}r_{\ell }^{\left( 1\right) }Z_{\ell ,d}\left( \cos \theta \right) .
	\end{equation*}
	Iterating, we obtain \eqref{eq:lemma1}.
	
	\end{proof}

Lemma \ref{lemma:mayeli1} exploits the natural fact that if a function $
q\left( u\right) $ can be expanded into Gegenbauer polynomials with
coefficients $\left\{ r_{\ell }: \ell \geq 0 \right\} $, then also $\left(
u-1\right) ^{N}q\left( u\right) $ can also be expanded with coefficients
which can explicitly computed by properly applying iteratively the
difference operators to the sequence $\left\{ r_{\ell }: \ell \geq 0 \right\} $. In some
sense, this can be viewed as an extension to the spherical domain of the
classical duality relationships between Fourier transforms and derivatives.

Let us prove now that $b_j(\ell)$ satisfies (\ref{eq:deltaplus}).

\begin{lemma}
\label{UpsilonN} For any $N\in \mathbb{N}$
\begin{equation*}
\Upsilon _{d}^{N}\left( \ell \right) b_{j}(\ell )\leq \frac{1}{2^{N}}
(2N)!\max_{u}\{D^{(2N)}b_{j}(u)\}+\sum_{i=0}^{2N-1}\frac{C(i)}{\ell ^{2N-i}}
\max_{u}\{D^{(i)}b_{j}(u)\}.
\end{equation*}
\end{lemma}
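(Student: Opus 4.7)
The plan is to prove Lemma \ref{UpsilonN} by iterated expansion of the operator $\Upsilon_d$ using the discrete Leibniz rule, isolating the single top-order contribution and bounding the remainder by Taylor's formula for finite differences.

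First, I would expand $\Upsilon_d^N(\ell) b_j(\ell)$ step by step, using the discrete Leibniz rule $\Delta^+(fg)(\ell) = g(\ell+1)\Delta^+ f(\ell) + f(\ell)\Delta^+ g(\ell)$ (and its counterpart for $\Delta^-\Delta^+$) each time a finite difference meets a coefficient of type $\upsilon_{1;d}$ or $\upsilon_{0;d}$. After $N$ iterations the result is a finite sum of monomials of the form $\kappa(\ell) (\Delta)^m b_j(\ell + k)$, where $(\Delta)^m$ is a composite finite-difference operator of total order $m \in \{0,\ldots,2N\}$, $k$ is a bounded integer shift, and $\kappa(\ell)$ is a product of the weights $\upsilon_{1;d}(\ell), \upsilon_{0;d}(\ell)$ and of their iterated discrete differences. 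The crucial combinatorial observation is that the only way to achieve the maximum derivative order $m = 2N$ on $b_j$ is that every one of the $N$ applications of $\Upsilon_d$ uses the $\upsilon_{1;d}\Delta^-\Delta^+$ piece and no Leibniz expansion ever absorbs a $\Delta$ onto a weight; every other monomial has $m \le 2N-1$ and necessarily contains at least $2N - m$ factors which are either $\upsilon_{0;d}(\ell) \le (d-1)/(2\ell)$ or a discrete derivative of a weight (and these decay as $O(\ell^{-(r+1)})$ for the $r$-th difference of $\upsilon_{1;d}$ or $\upsilon_{0;d}$).

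Second, I would estimate each monomial separately. For the coefficient, the bounds $\upsilon_{1;d}(\ell) \le 1/2$, $\upsilon_{0;d}(\ell) \le (d-1)/(2\ell)$, and the decay of finite differences of the weights, combined with the observation above, give $|\kappa(\ell)| \le C_{N,d}/\ell^{2N-m}$ for a constant depending only on $N$ and $d$. For the finite-difference factor, I would apply Taylor expansion of $b_j$ around $\ell$ to order $m$; using the standard cancellation identity that the $m$-th discrete difference of any polynomial of degree less than $m$ vanishes (exactly the dual identity used to prove Lemma \ref{lemma:mayeli1}), all but the top Taylor term drop out, yielding $|(\Delta)^m b_j(\ell + k)| \le c_m \max_u |D^{(m)}b_j(u)|$ for a combinatorial constant $c_m$ depending only on $m$. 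For the leading term $m=2N$, the operator is $(\Delta^-\Delta^+)^N b_j(\ell) = \sum_{k=-N}^{N}(-1)^{N-k}\binom{2N}{N-k} b_j(\ell+k)$, and the identity $\sum_{k=-N}^N(-1)^{N-k}\binom{2N}{N-k}k^{2N} = (2N)!$ is what produces the explicit $(2N)!$ factor in the statement.

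Third, I would collect contributions. The leading monomial yields $\upsilon_{1;d}(\ell)^N \cdot (2N)!\,\max_u|D^{(2N)}b_j(u)| \le \frac{(2N)!}{2^N}\max_u|D^{(2N)}b_j(u)|$, the first term in the claimed bound. The remaining monomials, after combining the coefficient and finite-difference estimates above and grouping them by the derivative order $i$ of $b_j$ that appears, reassemble into $\sum_{i=0}^{2N-1}\frac{C(i)}{\ell^{2N-i}}\max_u|D^{(i)}b_j(u)|$, with $C(i)$ absorbing all combinatorial multiplicities (which depend only on $N$ and $d$). The main obstacle is precisely the bookkeeping in the Leibniz expansion: verifying that every time a discrete difference lands on a weight instead of on $b_j$, the derivative order on $b_j$ decreases by exactly one and the coefficient gains exactly one extra factor of $1/\ell$, so that the power $\ell^{-(2N-i)}$ in the residual sum is sharp. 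Once this is verified, the rest of the argument is a routine application of Taylor expansion and the finite-difference cancellation identity already exploited in Lemma \ref{lemma:mayeli1}.
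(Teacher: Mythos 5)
Your proposal is correct and rests on the same mechanism as the paper's proof: expand the iterated operator $\Upsilon_d^N$, bound the weights $\upsilon_{1;d},\upsilon_{0;d}$ and their discrete differences by the appropriate powers of $1/\ell$, and convert the remaining finite differences of $b_j$ into derivatives of $b_j$ (the paper does the conversion by the Mean Value Theorem for $N=1$ and then a one-line induction, while you unroll the induction into an explicit discrete Leibniz expansion and use Taylor expansion with the vanishing-moment identity --- the same idea, carried out with more of the bookkeeping that the paper leaves implicit). One small inaccuracy that does not affect validity: the $(2N)!$ coming from $\sum_{k=-N}^{N}(-1)^{N-k}\binom{2N}{N-k}k^{2N}=(2N)!$ is cancelled by the factor $1/(2N)!$ in the corresponding Taylor coefficient, so the pure $2N$-th centered difference is in fact bounded by $\max_u|D^{(2N)}b_j(u)|$ alone; the factor $(2N)!/2^N$ in the statement of the lemma is simply a generous constant, not something your identity needs to produce.
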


\begin{proof}
Let us consider first $N=1$. Then we have
\begin{eqnarray*}\label{eq}
\Upsilon _{d}\left( \ell \right) b_j(\ell)&=&\left(\upsilon _{1;d}\left( \ell \right) \Delta
^{-}\Delta ^{+}+\upsilon _{0;d}\left( \ell \right) \Delta ^{+} \right) b_j(\ell)\\&
=&\upsilon _{1;d}\left( \ell \right) \Delta
^{-}\left( b_{j}(\ell+1)-b_j(\ell)\right)+\upsilon _{0;d}\left( \ell \right) \left(b_j(\ell+1)-b_j(\ell)\right)\\&
=&\frac{\ell}{2(\ell+\eta_d)}\left( b_{j}(\ell+1)-b_j(\ell)-(b_j(\ell)-b_j(\ell-1))\right)+\frac{2\eta_d}{2(\ell+\eta_d)} \left(b_j(\ell+1)-b_j(\ell)\right).
\end{eqnarray*}

The Mean Value Theorem implies that there exists $u_1 \in (\ell, \ell+1)$ and $u_2 \in (\ell-1,\ell)$ such that
\begin{equation}
\Upsilon _{d}\left( \ell \right) b_j(\ell)=\frac{\ell}{2(\ell+\eta_d)}\left( b_{j}'(u_1)-b_j'(u_2)\right)+\frac{2\eta_d}{2(\ell+\eta_d)} b_j'(u_1).
\end{equation}
Applying once more the Mean Value Theorem we have that there exists $a_1 \in (u_1,u_2)$ such that
\begin{equation*}
\Upsilon _{d}\left( \ell \right) b_j(\ell)
=\frac{\ell}{2(\ell+\eta_d)}\left( b_{j}''(a_1)(u_1-u_2)\right)+\frac{2\eta_d}{2(\ell+\eta_d)} b_j'(u_1).
\end{equation*}
Hence
\begin{equation*}
\Upsilon _{d}\left( \ell \right) b_j(\ell)
\leq \frac{2\ell}{2(\ell+\eta_d)} \max \left|b_{j}''(u)\right| +\frac{2\eta_d}{2(\ell+\eta_d)} \max \left|b_j'(u)\right|.
\end{equation*}
Our assumptions on $b_j(\ell)$ and its derivatives allow to complete the proof for $N=1$. The general case follows applying $\Upsilon_{d}^{N-1}$ on (\ref{eq}) and using induction, for $N \in \mathbb{N}$.

\begin{remark}
	Observe that
	$$\frac{2\ell}{2(\ell+\eta_d)} \max \left|b_{j}''(u)\right| \leq \frac{2\ell}{2(\ell+\eta_d)} \frac{1}{(S_j-S_{j-1})^2} \leq \frac{C_d}{(S_j-S_{j-1})^2} $$
	$$\frac{2\eta_d}{2(\ell+\eta_d)} \max \left|b_j'(u)\right| \leq \frac{2\eta_d}{2(\ell+\eta_d)} \frac{1}{(S_{j}-S_{j-1})} \leq \frac{C^\prime_d}{\ell(S_{j}-S_{j-1})},$$
	where $C_d, C_d^\prime >0$ depend only on $d$.
Then $$\Upsilon _{d}\left( \ell \right) b_j(\ell)\leq
	 C  \max \bigg\{ \frac{1}{S_{j-1}^{2}} , \frac{1}{(S_{j}-S_{j-1})^{2}}\bigg\}. $$
More generally,
	\begin{equation}\label{ups}
	\Upsilon _{d}^N\left( \ell \right) b_j(\ell)
	\leq
	 C(2N)  \max \bigg\{ \frac{1}{S_{j-1}^{2N}} , \frac{1}{(S_{j}-S_{j-1})^{2N}}\bigg\}.
	\end{equation}
\end{remark}

\end{proof}

\begin{remark}
\label{remark} It is immediate to see that, as $j\rightarrow \infty $,
\begin{equation*}
\sum_{\ell \in \Lambda_j}\ell ^{d-1}=\frac{1}{d}
(S_{j+1}^{d}-S_{j-1}^{d})+O(S_{j+1}^{d-1})=O(S_{j+1}^{d}-S_{j-1}^{d})\text{ .
}
\end{equation*}
\end{remark}

\begin{proof}[Proof of Proposition \protect\ref{lemma:mayeli2}]
	For any $j\in \mathbb{N}_{0}$, it suffices to note that
applying Lemma \ref{lemma:mayeli1}
yields
	\begin{eqnarray*}
		\left\vert \left( \cos \theta -1\right) ^{N}\Psi _{j}\left( \cos \theta
		\right) \right\vert &=&\left\vert \sum_{\ell \geq 0}b_j(\ell )^{\left(
			N\right) }Z_{\ell ,d}(\cos \theta )\right\vert .
	\end{eqnarray*}%

Lemma \ref{UpsilonN}, (\ref{ups}) and the conditions on $b_j(\cdot)$  imply that for all $M>0$

\begin{eqnarray*}
&&\left\vert \left( \cos \theta -1\right) ^{M}\Psi _{j}\left( \cos \theta
\right) \right\vert\\&
\leq& C_{M} \max \left\{
\frac{1}{S_{j-1}^{2M}},\frac{1}{\left( S_{j}-S_{j-1}\right) ^{2M}}\right\}
\sum_{\ell \in \Lambda_j}\frac{
		\ell +\eta _{d}}{\eta _{d}\omega _{d}}\left\vert G_{\ell }^{\left( \eta_{d}\right) }(\cos \theta )\right\vert .
 \end{eqnarray*}

In view of Remark \ref{remark}, because $\theta^2=O(\left\vert \cos \theta -1\right\vert)$, we have
	\begin{equation*}
	\left\vert \Psi _{j}\left( \cos \theta \right) \right\vert \leq
	C_{M}^{\prime } \max\bigg\{ \frac{1}{(S_{j-1})^{2M}}, \frac{1}{(S_j-S_{j-1})^{2M}}\bigg\}\frac{(S_{j+1}^d-S_{j-1}^d)}{\theta^{2M}}
	\end{equation*}
	as claimed.
\end{proof}

\section{Uncorrelation Properties\label{Uncorrelation}}

Our last step consists in showing that kernels of the type
\begin{equation*}
\Phi _{j}\left( \cos \theta \right) =\sum_{\ell \in \Lambda_j}b_{j}^{2
}\left( \ell\right) C_\ell Z_{\ell ;d}\left( \cos \theta \right) ,
\end{equation*}
satisfy a localization property under some conditions on the power spectrum $
C_\ell$ specified later. This result will allow us to show that needlet
coefficients are asymptotically uncorrelated for $j \to \infty$.

Recall first that, for all $d=1,2,...$
\begin{eqnarray*}
\left\vert Z_{\ell ;d}\left( \cos \theta \right) \right\vert &\leq &\frac{
2\ell +d-1}{(d-1)}\binom{\ell +d-2}{\ell } \\
&\leq &C_d\times \ell ^{d-1},
\end{eqnarray*}
where the constant $C_d$ depends only on $d$. Now note that
\begin{eqnarray*}
\frac{d^{N}}{du^{N}}(b_{j}(u)^{2}u^{-\alpha }g(u))&=&\sum_{k=0}^{N}\binom{N}{k}\frac{d^{k}}{du^{k}}b_{j}(u)^{2}\frac{d^{N-k}}{
du^{N-k}}(u^{-\alpha }g(u))\\
&=&\sum_{k=0}^{N}\binom{N}{k}\frac{d^{k}}{du^{k}}(a_{j+1}(u)-a_{j}(u))
\sum_{i=0}^{N-k}\frac{d^{i}}{du^{i}}u^{-\alpha }\frac{d^{N-k-i}}{du^{N-k-i}}
g(u)
\\
&=&\sum_{k=0}^{N}\binom{N}{k}\frac{d^{k}}{du^{k}}(a_{j+1}(u)-a_{j}(u))
\sum_{i=0}^{N-k}[-\alpha ]_{i}u^{-\alpha -i}\frac{d^{N-k-i}}{du^{N-k-i}}g(u),
\end{eqnarray*}
where
\begin{equation*}
\lbrack -\alpha ]_{i}:=-\alpha (-\alpha -1)...(-\alpha -i+1)\text{ .}
\end{equation*}
It follows that, for all $\ell $ such that $S_{j-1}\leq \ell \leq S_{j+1},$
we have

\begin{eqnarray*}
\left\vert \frac{d^{N}}{du^{N}}(b_{j}(u)^{2}u^{-\alpha }g(u))\right\vert
_{u=\ell }
 &\leq& C_{N,\alpha }\sum_{k=0}^{N}\binom{N}{k}\frac{1}{(S_{j}-S_{j-1})^{k}}
\sum_{i=0}^{N-k}\ell ^{-\alpha -i}\ell ^{-(N-k-i)(1-\beta )}
\\& \leq& C_{N,\alpha }\ell ^{-\alpha }\ell ^{-N(1-\beta )}\sum_{k=0}^{N}\frac{
\ell ^{k(1-\beta )}}{(S_{j}-S_{j-1})^{k}}\\&=&\sum_{k=0}^{N}\frac{C_{N,\alpha
}\ell ^{-\alpha }}{(S_{j}-S_{j-1})^{k}\ell ^{(N-k)(1-\beta )}}\text{ .}
\end{eqnarray*}

Note that for $(S_{j}-S_{j-1})\geq S_{j-1}^{(1-\beta )}$ the denominator is
bounded below by $S_{j-1}^{N(1-\beta )},$ whereas for $
(S_{j}-S_{j-1})<S_{j-1}^{(1-\beta )}$ we have the smaller bound $
(S_{j}-S_{j-1})^{N}<S_{j-1}^{-N(1-\beta )}$. The bottom line is hence

\begin{equation*}
\left\vert \frac{d^{N}}{du^{N}}(b_{j}(u)^{2}u^{-\alpha }g(u))\right\vert
_{u=\ell }\leq C\times \ell ^{-\alpha }\times \max \left\{ \frac{1}{
S_{j-1}^{N(1-\beta )}},\frac{1}{(S_{j}-S_{j-1})^{N}}\right\} \text{ ,}
\end{equation*}
where $C>0$.

Now consider the correlation function

\begin{equation*}
\Phi (\cos \theta )=\sum_{\ell \in \Lambda_j}b_{j}(\ell )^{2}\ell
^{-\alpha }g(\ell )Z_{\ell ,d}(\cos \theta )\text{ ;}
\end{equation*}
we have the bound

\begin{eqnarray*}
|\cos \theta -1|^{N}\Phi (\cos \theta )&=&\sum_{\ell \in (S_{j-1},S_{j+1})}\left\{
\Upsilon _{d}^{N}\left( \ell \right) b_{j}(\ell )^{2}\ell ^{-\alpha }g(\ell
)\right\} Z_{\ell ,d}(\cos \theta )\\&
\leq& C\times \max \left\{ \frac{1}{S_{j-1}^{2N(1-\beta )}},\frac{1}{
(S_{j}-S_{j-1})^{2N}}\right\} \sum_{\ell \in (S_{j-1},S_{j+1})}\ell ^{-\alpha
}Z_{\ell ,d}(\cos \theta )\\&
\leq& C\times \max \left\{ \frac{1}{S_{j-1}^{2N(1-\beta )}},\frac{1}{
(S_{j}-S_{j-1})^{2N}}\right\} \\&
\times& \min \left\{ (S_{j+1}-S_{j-1})S_{j-1}^{d-\alpha -1},S_{j-1}^{d-\alpha
}\right\} \text{ ,}
\end{eqnarray*}
where $C>0$. It is easy to check that the denominator (i.e., the variance of the field $
\beta _{j}(\cdot)$) is given by
\begin{equation*}
\sum_{\ell \in\Lambda_j}b_{j}(\ell )^{2}\ell ^{-\alpha }g(\ell )\frac{
\ell +\eta _{d}}{\eta _{d}\omega _{d}}G_{\ell }^{(\eta _{d})}(1).
\end{equation*}
Because $b_{j}^{(1)}\leq K/(S_{j}-S_{j-1})$ and $b_{j}(\ell )=1$ for some $
\ell \in \Lambda_j,$ by a simple first-order Taylor expansion it is
readily seen that there exist $S_{j-1}^{\prime },S_{j+1}^{\prime }$ which
satisfy the following conditions:

\begin{eqnarray*}
S_{j-1} &<&S_{j-1}^{\prime }<S_{j+1}^{\prime }<S_{j+1}\text{ ,} \\
(S_{j+1}^{\prime }-S_{j-1}^{\prime }) &>&c_{1}(S_{j+1}-S_{j-1})\text{ , some
}c_{1}>0\text{ ,} \\
b_{j}(\ell ) &>&c_{2}>0\text{ for all }\ell \in (S_{j-1}^{\prime
},S_{j+1}^{\prime })\text{ ,}
\end{eqnarray*}
where the constants $c_{1},c_{2}$ are absolute (they do not depend on $j$).
Hence we have the lower bound
\begin{eqnarray*}
\sum_{\ell \in \Lambda_j}b_{j}(\ell )^{2}\ell ^{-\alpha }g(\ell )\frac{
\ell +\eta _{d}}{\eta _{d}\omega _{d}}G_{\ell }^{(\eta _{d})}(1)
&\geq &c_{2}^{2}\sum_{\ell \in (S_{j-1}^{\prime },S_{j+1}^{\prime })}\ell
^{-\alpha }g(\ell )\frac{\ell +\eta _{d}}{\eta _{d}\omega _{d}}G_{\ell
}^{(\eta _{d})} \\
&\geq & C \times \min \left\{ (S_{j+1}-S_{j-1})S_{j-1}^{d-\alpha
-1},S_{j-1}^{d-\alpha }\right\} \text{ ,}
\end{eqnarray*}
where $C>0$. Then, we have
\begin{eqnarray*}
\mathrm{Corr}(\beta _{j}(x),\beta _{j}(y)) \\
&\leq &C\times \max \left\{ \frac{1}{S_{j-1}^{2N(1-\beta )}},\frac{1}{
(S_{j}-S_{j-1})^{2N}}\right\} \frac{1}{|\cos \theta -1|^{N}} \\
\text{ } &\leq &C^{\prime }\times \max \left\{ \frac{1}{
(S_{j-1}^{(1-\beta )}\theta )^{2N}},\frac{1}{((S_{j}-S_{j-1})\theta )^{2N}}
\right\} \text{,}
\end{eqnarray*}
with $C,C^\prime>0$, as claimed.

\section{Appendix \label{Appendix}: an Explicit Construction for $\left\{
b_{j}(\cdot )\right\} _{j\in \mathbb{N}}$}

\label{sec:construction} In this Appendix, we will provide an explicit
construction of $\left\{ b_{j}(\cdot )\right\} _{j\in \mathbb{N}}$. Most of the steps are a
generalization under the more general circumstances considered in this paper of the procedure which was suggested in \cite{bkmpAoS} for
the standard needlet case.

Let us define a sequence of functions $a_{j}:\mathbb{\ R^{+}}\rightarrow
\left[ 0,1\right] $ such that
\begin{equation*}
a_{j}\in C^{\infty }\left( \mathbb{R^{+}}\right) ,\text{ }a_{j}\left(
u\right) =1\text{ for }\left\vert u\right\vert \leq S_{j-1}\text{ for }j\geq
1,
\end{equation*}
(so that $a_{0}\left( 0\right) =1)$, and
\begin{equation*}
0<a_{j}\left( u\right) \leq 1\text{ for }u\in \left[ S_{j-1},S_{j}\right] \text{ }.
\end{equation*}

We introduce now a sequence of window functions $\left\{ b_{j}:j\in \mathbb{N
}\right\} $ given by
\begin{equation}
b_{j}\left( u\right) :=\sqrt{a_{j+1}\left( u\right) -a_{j}\left( u\right) }.
\label{eq:bjfun}
\end{equation}

Observe that
\begin{equation}  \label{eq:bjexplicit}
b_{j}(u)=
\begin{cases}
\sqrt{1-a_j(u)} & S_{j-1}< u\leq S_{j} \\
\sqrt{a_{j+1}(u)} & S_{j}< u< S_{j+1} \\
0 & \text{otherwise}
\end{cases}
.
\end{equation}

\begin{lemma}
For any $j \in \mathbb{N}$, it holds that $b_j\in C^\infty$.
\end{lemma}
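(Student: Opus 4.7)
The plan is to verify smoothness of $b_j$ at each $u \in \mathbb{R}$ case by case using the piecewise description \eqref{eq:bjexplicit}. On the open set $\mathbb{R} \setminus \{S_{j-1}, S_{j+1}\}$ smoothness is essentially immediate: on each of $(S_{j-1}, S_j)$ and $(S_j, S_{j+1})$ the radicand $b_j^2 = a_{j+1} - a_j$ is a smooth strictly positive function (since $a_j, a_{j+1} \in C^\infty$ and $a_{j+1}(u) > a_j(u)$ there), so $b_j = \sqrt{b_j^2}$ is smooth by composition with the smooth square root on $(0, \infty)$. At $u = S_j$ one has $b_j^2(S_j) = 1 - 0 = 1 > 0$, so a full neighbourhood of $S_j$ lies in the positive set and the two pieces of \eqref{eq:bjexplicit} glue into a single smooth function there. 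Outside $[S_{j-1}, S_{j+1}]$, $b_j \equiv 0$ is trivially smooth.

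The delicate issue is smoothness at the boundary points $S_{j-1}$ and $S_{j+1}$, where $b_j$ vanishes but is not identically zero on the inward side. I focus on $S_{j-1}$ (the analysis at $S_{j+1}$ is symmetric): I must show that $u \mapsto \sqrt{1 - a_j(u)}$, defined on $[S_{j-1}, S_j]$ and extended by zero on $(-\infty, S_{j-1}]$, belongs to $C^\infty$ at $S_{j-1}$. This is where I would invoke the explicit construction of $a_j$ given earlier in the Appendix. If $a_j$ is built by taking $a_j(u) = 1 - \phi((u - S_{j-1})/(S_j - S_{j-1}))$ on $[S_{j-1}, S_j]$ for a fixed transition $\phi \in C^\infty(\mathbb{R})$ with $\phi \equiv 0$ on $(-\infty, 0]$ and $\phi \equiv 1$ on $[1, \infty)$, then $\sqrt{1 - a_j(u)} = \sqrt{\phi(t)}$ with $t = (u - S_{j-1})/(S_j - S_{j-1})$. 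For the classical Narcowich--Petrushev--Ward choice $\phi(t) = \mu(t)/(\mu(t) + \mu(1-t))$ with $\mu(t) = e^{-1/t}$ for $t > 0$ and $\mu(t) = 0$ otherwise, one checks that near $t = 0$ one has $\phi(t) \sim c\, e^{-1/t}$, so $\sqrt{\phi(t)} \sim \sqrt{c}\, e^{-1/(2t)}$ is again of flat-exponential type and lies in $C^\infty(\mathbb{R})$ with all derivatives vanishing at $0$; the affine change of variable then yields smoothness at $S_{j-1}$. The treatment at $S_{j+1}$ uses $\sqrt{a_{j+1}(u)} = \sqrt{1 - \phi(\cdot)}$ and the analogous flatness of $\sqrt{1-\phi}$ at $t = 1$.

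The principal obstacle is exactly this last step: smoothness of $b_j^2$ does \emph{not} automatically give smoothness of $b_j$ when $b_j^2$ vanishes on a half-neighbourhood (the prototypical obstruction being $\sqrt{u^2} = |u|$, which is only $C^0$ at $0$). The argument must therefore exploit the particular flat, exponentially small structure of the transition function $\phi$ underpinning the construction of the $a_j$; no purely formal manipulation based on smoothness of the $a_j$ alone would suffice. Once this structural fact about $\phi$ is secured, gluing across the three critical points $S_{j-1}$, $S_j$, $S_{j+1}$ is routine.
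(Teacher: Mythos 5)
Your proof is correct in substance but takes a genuinely different route from the paper's at the only delicate step. The paper proves the lemma for a \emph{general} admissible sequence $\{a_j\}$, before any explicit recipe is introduced: at $u=S_{j+1}$ (resp.\ $S_{j-1}$) it Taylor-expands $a_{j+1}$ (resp.\ $a_j$), notes that all one-sided derivatives vanish there because the function is locally constant on the outer side, deduces $a_{j+1}(u)=o((u-S_{j+1})^n)$ for every $n$, concludes that the difference quotient of $b_j=\sqrt{a_{j+1}}$ is $o(1)$ so that $b_j\in C^1$, and then asserts that ``iterating the procedure'' yields $C^\infty$. You instead anchor the argument in the explicit exponential transition function, showing $\sqrt{\phi}$ is again of flat-exponential type. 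Each approach buys something: the paper's covers every $a_j$ satisfying the qualitative hypotheses and its $C^1$ step is complete, but the iteration to all orders is precisely where the obstruction you flag lives --- a nonnegative $C^\infty$ function, flat at a zero, need not have a $C^2$ (let alone $C^\infty$) square root in general, so smoothness and flatness of the radicand alone do not formally close the argument. Your route concedes generality but is airtight for the concrete choice: with the Narcowich--Petrushev--Ward normalization $\phi(t)=\mu(t)/(\mu(t)+\mu(1-t))$, $\mu(t)=e^{-1/t}$, one has $\sqrt{\phi(t)}=e^{-1/(2t)}\bigl(\mu(t)+\mu(1-t)\bigr)^{-1/2}$ near $t=0$, the product of the standard flat smooth function with a smooth factor bounded away from zero, hence manifestly $C^\infty$ and flat. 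Two caveats: the paper's own recipe \eqref{sabato1} uses the integrated bump $\Phi(v)=C_\Phi^{-1}\int_{-1}^{v}\phi(t)\,dt$ rather than your quotient form, so your key estimate must be redone for $\sqrt{\smash[b]{\int_v^1\phi}}$ at $v=1$ (it still works, but is not a one-liner); and, as you yourself note, your proof establishes the lemma only for the explicit construction, not for the general class of $a_j$ for which it is nominally stated.
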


\begin{proof}
	
	For any $j \in \mathbb{N}$, it follows from Equation \eqref{eq:bjexplicit} that $b_j (u) \in C^\infty$ in
	$	(0, S_{j-1}) \cup (S_{j-1}, S_{j + 1})\cup (S_{j+1},\infty)$. To establish the smoothness of $b_j(\cdot)$ we need to study the behaviour of $a_j (u) $ (and, consequently, $b_j (u)$) in $ u = S_{j-1}, S_{j+1}.$ In order to do so we prove that left and right derivatives coincide in these two points. Let us start by proving that $b_j(\cdot)$ is $C^\infty$ in $S_{j+1}$.
	
	The Taylor series of $a_{j+1}$ centered at $S_{j+1}$ can be written as $$a_{j+1}(u)= a_{j+1}(S_{j+1})+\dots+ \frac{a_{j+1}^{(n)}{(S_{j+1})}}{n!} (u-S_{j+1})^n+o((u-S_{j+1})^n) \mbox{ as } u\to S_{j+1}$$
	for all $n$. Since $a_j(u)$  $ \in C^{\infty}$ and  $a_{j+1}(S_{j+1}^+)^{(k)}=0$ we get that $a_{j+1}(S_{j+1})^{(k)}=0$ for all $k=0,\dots, n$ and then
	
	$$a_{j+1}(u)= o((u-S_{j+1})^n)$$ for all $n$, as $u \to S_{j+1}$.
	
	Moreover,  $a_{j}(S_{j+1}^-)=0$ and then $b_j(u)=\sqrt{a_{j+1}(u)}$. Hence we get that $$\frac{b_j(u)-b_j(S_{j+1})}{u-S_{j+1}}=\dfrac{\sqrt{a_{j+1}(u)}-0}{u-S_{j+1}}= \dfrac{o(u-S_{j+1})}{u-S_{j+1}}=o(1)$$
and then $b_j \in C^1$ in $S_{j+1}$.
	
	A similar argument can be implemented for  $u=S_{j-1}$. Indeed, we note that $a_{j}(S_{j-1})=1$ and since $a_j(u)$ is $C^{\infty}$ and it is zero on $S_{j-1}^-$, we have that $a_{j}(S_{j-1})^{(k)}=0$ for all $k=1,\dots, n$. Then a Taylor series expansion leads to
	
	$$a_{j}(u)= 1+o((u-S_{j+1})^n)$$
for all $n$. Moreover, since $a_j$ is continuous and it is equal to  1 in $S_{j-1}^-$ we have that $a_{j}(S_{j-1}^+)=1$ and also
	$ a_{j+1}(S_{j-1})=1$. Hence in a neighborhood of $S_{j-1}$ we have that $b_j(u)=\sqrt{1-a_j(u)}$ so that the quotient derivative of $b_j(\cdot)$ from the right is
	$$\dfrac{\sqrt{1-a_j(u)}-0}{{u-S_{j-1}}}=\dfrac{o(u-S_{j-1})}{u-S_{j-1}}=o(1).$$ Then $b_j \in C^1$ in $S_{j-1}$ which implies $b_j \in C^1$ in $[0, \infty)$; iterating the procedure proves that $b_j \in C^\infty$.
\end{proof}

We propose here a numerical recipe for $b_{j}(\cdot)$, which is largely analogous
to the proposal developed in \cite{bkmpAoS} for the standard needlet
construction. First introduce the function $\phi \in C_{c}^{\infty }$, given
by
\begin{equation*}
\phi (t)=
\begin{cases}
\exp \left( -\frac{1}{1-t^{2}}\right) & \mbox{ for }t\in \lbrack -1,1] \\
0 & \mbox{ otherwise }
\end{cases}
\end{equation*}
The function $\phi $ belongs to the Schwarz space; consider now
\begin{equation*}
\Phi \left( u\right) =
\begin{cases}
0 & u\leq -1 \\
\frac{\int_{-1}^{u}\phi \left( t\right) dt}{C_{\Phi }} & u\in (-1,1) \\
1 & u\geq 1
\end{cases}
,
\end{equation*}
where
\begin{equation*}
C_{\Phi }=\int_{-1}^{1}\phi \left( t\right) dt=\int_{-1}^{1}\exp \left( -
\frac{1}{1-t^{2}}\right) dt\simeq 0.444.
\end{equation*}
Also, for any $j\in \mathbb{N}$, define
\begin{equation}
a_{j}(u)=
\begin{cases}
1 & \text{ for }u\in \left[ 0,S_{j-1}\right] \\
\Phi \left( \frac{\left( S_{j}+S_{j-1}-2u\right) }{\left(
S_{j}-S_{j-1}\right) }\right) & \text{ for }u\in \left( \left. S_{j-1},S_{j}
\right] \right. \\
0 & \text{ for }u\in \left[ \left. S_{j},\infty \right) \right.
\end{cases} 
. \label{sabato1}
\end{equation}
Note that in $[S_{j-1},S_{j}]$
\begin{equation*}
a_{j}(u)=\Phi (\tau _{j}(u))
\end{equation*}
where $\tau _{j}$ is a linear transformation defined by
\begin{equation*}
\tau _{j}(u)=m_{j}u+q_{j}
\end{equation*}
with
\begin{equation*}
m_{j}=-\frac{2}{S_{j}-S_{j-1}};\quad q_{j}=\frac{S_{j}+S_{j-1}}{S_{j}-S_{j-1}
}.
\end{equation*}

\begin{remark}
It follows that, for any $r \in \mathbb{N}$,
\begin{eqnarray}
a_{j}^{(r)}(u) &=& \frac{d^r}{du^r}a_j(u)=\tau_j^{(r)}(u)\Phi ^{(r)}\left(
\tau _{j}(u)\right)  \notag \\
&=&\frac{(-2)^{r}}{\left( S_{j}-S_{j-1}\right) ^{r}}\frac{\phi^{(r-1)}\left(
\tau _{j}(u)\right) }{C_\Phi}\text{ .}  \label{eq:ajder}
\end{eqnarray}
\end{remark}

Finally, according to (\ref{eq:bjfun}), we can define a sequence of window functions $\left\{b_j:j\in
\mathbb{N}\right\}$, where $b_j: \mathbb{R}^+\rightarrow \left[0,1\right]$
is such that
\begin{equation}
b_{j}\left( u\right):= \sqrt{a_{j+1}\left( u\right) -a_{j}\left( u\right)}. \label{sabato2}
\end{equation}

\begin{proposition}
\label{bound1} For any $a_j(\cdot)$ defined as in \ref{sabato1} and $n\geq1$
\begin{equation*}
|D^{(n)}a_j(u)|\leq k(n-1)2^{n} \frac{1}{(S_{j}-S_{j-1})^n}
\end{equation*}
where $k(n-1)$ does not depend on $j$.
\end{proposition}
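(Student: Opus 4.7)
The plan is a direct computation based on the chain rule, exploiting the fact that $a_j$ is defined as $\Phi$ composed with an affine map.

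First I would observe that $a_j$ is constant (equal to $1$ or $0$) on $[0,S_{j-1}]$ and on $[S_j,\infty)$, so for any $n\ge 1$ the derivative $D^{(n)}a_j(u)$ is identically zero outside the interval $(S_{j-1},S_j)$. It therefore suffices to bound $D^{(n)}a_j$ on $(S_{j-1},S_j)$, where $a_j(u)=\Phi(\tau_j(u))$ with $\tau_j$ the affine map of slope $m_j=-2/(S_j-S_{j-1})$.

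Next, since $\tau_j$ is affine, all higher derivatives of $\tau_j$ vanish and Faà di Bruno collapses to
\begin{equation*}
a_j^{(n)}(u) \;=\; m_j^{\,n}\,\Phi^{(n)}(\tau_j(u)) \;=\; \frac{(-2)^n}{(S_j-S_{j-1})^n}\,\Phi^{(n)}(\tau_j(u)),
\end{equation*}
which recovers the formula in \eqref{eq:ajder}. Taking absolute values gives
\begin{equation*}
|D^{(n)}a_j(u)| \;\le\; \frac{2^n}{(S_j-S_{j-1})^n}\,\bigl|\Phi^{(n)}(\tau_j(u))\bigr|.
\end{equation*}

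The remaining task is to uniformly bound $\Phi^{(n)}$ on $\mathbb{R}$ by a constant that depends only on $n$. Because $\Phi$ is constant on $(-\infty,-1]$ and on $[1,\infty)$, every derivative $\Phi^{(n)}$ (for $n\ge 1$) is supported in $[-1,1]$. Moreover, the bump construction gives $\Phi^{(n)}(t)=\phi^{(n-1)}(t)/C_\Phi$ on $(-1,1)$, and since $\phi$ vanishes to infinite order at $\pm 1$, $\Phi\in C^\infty(\mathbb{R})$. Consequently $\Phi^{(n)}$ is continuous with compact support, and
\begin{equation*}
k(n-1) \;:=\; \sup_{t\in \mathbb{R}}|\Phi^{(n)}(t)| \;=\; \frac{1}{C_\Phi}\sup_{t\in[-1,1]}|\phi^{(n-1)}(t)|
\end{equation*}
is a finite constant depending only on $n$ (through the fixed function $\phi$), not on $j$. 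Substituting yields the claimed bound. The only subtlety is the smoothness of $a_j$ at the endpoints $u=S_{j-1}$ and $u=S_j$, but this is precisely the infinite-order-vanishing argument already carried out in the preceding lemma for $b_j$, which applies verbatim to $a_j$ via the affine composition; so no additional work is required there, and this step, mild as it is, is the only conceptual point in the proof.
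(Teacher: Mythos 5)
Your proof is correct and follows the same skeleton as the paper's: factor out the affine change of variables to get $a_j^{(n)}(u)=m_j^n\,\Phi^{(n)}(\tau_j(u))$ with $|m_j|^n=2^n/(S_j-S_{j-1})^n$, and then bound the remaining factor by a constant depending only on $n$. Where you diverge is in how that remaining factor is controlled. The paper devotes essentially its entire proof to an explicit quantitative bound on $\phi^{(n-1)}$: it writes $\phi=s\circ g$ with $s(t)=e^{-1/t}$ and $g(y)=1-y^2$, computes $s^{(n)}(t)=G_n(t)t^{-2n}s(t)$ via the recursion for the polynomials $G_n$, and runs Fa\`a di Bruno to produce an explicit constant $k(n)$ (this computation is also, implicitly, the proof that $\phi$ is smooth at $\pm1$). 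You instead invoke the standard fact that the bump function $\phi$ is $C^\infty$ with all derivatives vanishing to infinite order at $\pm1$, so that $\Phi^{(n)}$ is continuous and compactly supported in $[-1,1]$ and $k(n-1):=\sup_{[-1,1]}|\phi^{(n-1)}|/C_\Phi$ is finite and manifestly independent of $j$. Since the proposition only asserts the existence of a $j$-independent constant, your soft compactness argument suffices and is considerably more economical; what the paper's longer route buys is an explicit, in principle computable, value of $k(n)$, which neither the statement nor its later uses actually require. Your handling of the endpoints $u=S_{j-1},S_j$ (constancy of $a_j$ outside $(S_{j-1},S_j)$ plus infinite-order vanishing of $\phi$ at $\tau_j(S_{j-1})=1$ and $\tau_j(S_j)=-1$) is also sound.
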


\begin{proof}
	Let us rewrite \eqref{eq:ajder} as
	\begin{equation}\label{a1}
	a_j^{(r)}(u) = \frac{(-2)^r}{(S_j-S_{j-1})^r} \frac{\phi^{(r-1)} (\tau_j(u))}{C_\Phi}.
	\end{equation}
	In order to study the behavior of $\phi^{(r-1)}(\tau_j(u))$,
	let us start focusing on the function
	$s:\mathbb{R} \rightarrow \mathbb{R}$ given by
	\begin{equation*}
	s(t)= \begin{cases}
	e^{-\frac{1}{t}} & \mbox{ if } t >0 \\ 0 &\mbox{ otherwise.}
	\end{cases}
	\end{equation*}
	Since $s \in C^\infty \left(\mathbb{R}\right)$, we can explicitly compute its derivatives for any $n \in \mathbb{N}$ as
	\begin{equation*}
	s^{(n)}(t)= \begin{cases}
	\frac{G_n(t)}{t^{2n}} s(t) & \mbox{ if } t >0 \\ 0 &\mbox{ otherwise}
	\end{cases},
	\end{equation*}
	where $G_n$ is a polynomial of degree $n-1$ defined recursively by the following formula
	\begin{align*}
	G_1(t)&=1\\
	G_{n+1}(t)&=t^{2}G_{n}^{\prime}(t)-(2nt-1)G_{n}(t).
	\end{align*}
	Since
	\begin{equation*}
	\phi(\tau_j(u))= \begin{cases}
	e^{-\frac{1}{1-\tau_j(u)^2}} & \mbox{ if } \tau_j(u) \in [-1,1] \\ 0 &\mbox{ otherwise}
	\end{cases},
	\end{equation*}
we can rewrite
	$$\phi(\tau_j(u))= s(g(\tau_j(u))) \quad  \text{with } g(y)=1-y^2.$$
	Using the notation $D^{(n)}=\frac{d^n}{du^n}$, and applying the chain rule for high order derivatives for composite functions, the so-called \emph{Fa\`a di Bruno's formula} yields for $\tau_j(u) \in [-1,1]$,
\[
	D^{(n)}\phi(\tau_j(u))= n! \sum_{\nu=1}^{n} \frac{D^{(\nu)} s(g(\tau_j(u)))}{\nu!} \sum_{h_1+\dots+h_\nu=n} \frac{D^{h_1}(1-\tau_j(u)^2)}{h_1!}\dots  \frac{D^{h_\nu}(1-\tau_j(u)^2)}{h_\nu!}
\]
\[
	= n! \sum_{\nu=1}^{n} \frac{G_n(g(\tau_{j}(u)))}{g(\tau_j(u))^{2\nu}}\frac{s(g(\tau_j(u)))}{\nu!} \sum_{h_1+\dots+h_\nu=n}
	\frac{D^{h_1}(1-\tau_j(u)^2)}{h_1!}\dots  \frac{D^{h_\nu}(1-\tau_j(u)^2)}{h_\nu!}
\]
\[
	=n!
	\sum_{\nu=1}^{n}  \frac{G_n(1-\tau_{j}(u)^2)}{(1-\tau_j(u)^2)^{2\nu}}\frac{e^{-\frac{1}{1-\tau_j(u)^2}}}{\nu!} \sum_{h_1+\dots+h_\nu=n}
	\frac{D^{h_1}(1-\tau_j(u)^2)}{h_1!}\dots  \frac{D^{h_\nu}(1-\tau_j(u)^2)}{h_\nu!}
\]
where $h_i\geq 1$.\\
	
	Before we proceed further, we need to recall a couple of immediate facts. First note that
if $G_n$ is a polynomial of degree $n$, then
since $|\tau_j(u)|\leq 1$
$$
\left \vert G_n(1-\tau_j(u)^2)\right \vert  \leq C(n).
$$

Also, it holds that

	\begin{equation*}
	\sum_{h_1+\dots+h_\nu=n}\frac{D^{h_1}(1-\tau_j(u)^2)}{h_1!}\dots
\frac{D^{h_\nu}(1-\tau_j(u)^2)}{h_\nu!}  \leq \binom{n+\nu-1}{n} (2)^\nu
(\tau_j(u))^\nu.
	\end{equation*}
	Indeed inside the sum we have the first and second derivatives of $(1-\tau_j(u)^2)$ and hence we are
summing terms of the form $2^\alpha (2\tau_j(u))^\beta$  with $\alpha+\beta=\nu$. The binomial coefficient counts all the possible
combinations such that $h_1+\dots+h_\nu=n$. \\
	
	Thus we have that
	\begin{equation*}
	D^{(n)}\phi(\tau_j(u))\leq n!
	 C(n)e^{-\frac{1}{
1-\tau_j(u)^2}}\sum_{\nu=1}^{n}  \frac{\tau_j(u)^\nu }{(1-\tau_j(u)^2)^{2\nu}
}\frac{2^\nu}{\nu!} \binom{n+\nu-1}{n}.
	\end{equation*}

	Now, considering that
\[
	 \frac{\tau_j(u)^\nu }{
(1-\tau_j(u)^2)^{2\nu}} \leq \frac{1 }{(1-\tau_j(u)^2)^{2n}},
\]
\[
	\sum_{\nu=1}^{n} \frac{(2)^\nu}{\nu!} \binom{n+\nu-1}{n}=\frac{2^n n \binom{2n}{n}}{n+1};
\]
it follows that
	\begin{equation*}
	D^{(n)}\phi(\tau_j(u))\leq n!
	\frac{2^n n \binom{2n}{n}}{n+1}C(n) e^{-\frac{1}{1-\tau_j(u)^2}
} \frac{1 }{(1-\tau_j(u)^2)^{2n}}.
	\end{equation*}
	Finally, observe that
	
$$
\left|e^{-\frac{1}{1-\tau_j(u)^2}} \frac{1 }{(1-\tau_j(u)^2)^{2n}}
\right|\leq \max \bigg\{ e^{-\frac{1}{1-\tau_j(u)^2}} \frac{1 }{
(1-\tau_j(u)^2)^{2n}} \bigg\}= \frac{k(n)}{e^{2n}}
$$
for $\tau_j(u) \in [-1,1],$
leading to

\[
\left \vert D^{(n)}\phi(\tau_j(u)) \right \vert \leq k(n)
\]
where $k(n)$ does not depend on $j$. Substituting in (\ref{a1}) the proof of the proposition is completed.
\end{proof}

The next result is similar.

\begin{lemma}
For any $b_j(\cdot)$ defined as in \ref{sabato2} and $n=1,2,....$, we have that
\begin{equation*}
|D^{(n)}b_{j}(u)|\leq K(n)\frac{1}{(S_{j}-S_{j-1})^{n}}\text{ ,}
\end{equation*}
where $K(n)$ does not depend on $j$.
\end{lemma}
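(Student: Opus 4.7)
The plan is to recognize $b_j$, on each of the two non-trivial pieces $[S_{j-1}, S_j]$ and $[S_j, S_{j+1}]$ of its support, as the composition of a $j$-independent smooth profile with an affine rescaling of the argument; once this structure is exposed, the bound becomes a one-line chain-rule computation.

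First I would combine the piecewise formula \eqref{eq:bjexplicit} with the explicit construction \eqref{sabato1} of $a_j$ to write, on $[S_{j-1}, S_j]$,
\[
b_j(u) = H_1(\tau_j(u)), \qquad H_1(\tau) := \sqrt{1 - \Phi(\tau)},
\]
with $H_1$ extended by the constant $1$ for $\tau \le -1$ and by $0$ for $\tau \ge 1$; and on $[S_j, S_{j+1}]$,
\[
b_j(u) = H_2(\tau_{j+1}(u)), \qquad H_2(\tau) := \sqrt{\Phi(\tau)},
\]
with $H_2$ extended by $0$ for $\tau \le -1$ and by $1$ for $\tau \ge 1$. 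Crucially, $H_1$ and $H_2$ are \emph{universal} (independent of $j$), while the inner maps $\tau_j$ and $\tau_{j+1}$ are affine with slopes of absolute value $2/(S_j - S_{j-1})$ and $2/(S_{j+1} - S_j)$ respectively.

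Second, the Taylor-expansion argument already carried out in the appendix to prove $b_j \in C^\infty$ applies verbatim to yield $H_1, H_2 \in C^\infty(\mathbb{R})$; the crux is again that $1-\Phi$ and $\Phi$ vanish to infinite order at $\tau = 1$ and $\tau = -1$ respectively, so their square roots glue smoothly onto the constant extensions. Since each $H_k^{(n)}$ is continuous and identically zero outside the compact interval $[-1,1]$, the quantity
\[
M(n) := \max\bigl\{\sup_{\tau\in\mathbb{R}}|H_1^{(n)}(\tau)|,\ \sup_{\tau\in\mathbb{R}}|H_2^{(n)}(\tau)|\bigr\}
\]
is a finite constant depending only on $n$ and on the fixed mother function $\Phi$.

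Finally, because the inner map on each piece is affine, the chain rule collapses to $D^{(n)}[H_k(\tau_\ell(u))] = (\tau_\ell'(u))^n\,H_k^{(n)}(\tau_\ell(u))$, which immediately gives
\[
|D^{(n)}b_j(u)| \le \frac{2^n M(n)}{(S_j - S_{j-1})^n} \;\;\text{on } [S_{j-1}, S_j], \qquad |D^{(n)}b_j(u)| \le \frac{2^n M(n)}{(S_{j+1} - S_j)^n} \;\;\text{on } [S_j, S_{j+1}].
\]
The monotonicity assumption $S_{j+1} - S_j \ge S_j - S_{j-1}$ made in the main text absorbs the second estimate into the first, and setting $K(n) := 2^n M(n)$ delivers the stated uniform bound. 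The only delicate point in the whole argument is the universal smoothness of $H_1$ and $H_2$ at $\tau = \pm 1$ (which is where the $-1/2$ power hidden inside the square root could a priori cause trouble); but this is structurally identical to — and in fact a direct consequence of — the flatness argument already executed in the preceding lemma for $b_j$ itself, so no genuinely new estimate is required.
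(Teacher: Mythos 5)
Your proof is correct, but it takes a genuinely different and arguably cleaner route than the paper's. The paper applies Fa\`a di Bruno's formula directly to $b_j(u)=\sqrt{1-a_j(u)}$ as a function of $u$, carries the $j$-dependence through every term, invokes Proposition \ref{bound1} to bound each $D^{(h_i)}a_j(u)$ by $C(h_i)2^{h_i}(S_j-S_{j-1})^{-h_i}$, and then controls the singular factors $(1-a_j(u))^{-\nu+1/2}$ and $(1-\tau_j(u)^2)^{-2n}$ by the exponential $e^{-1/(1-\tau_j(u)^2)}$. You instead perform the affine substitution first, writing $b_j = H_1\circ\tau_j$ on $[S_{j-1},S_j]$ and $b_j=H_2\circ\tau_{j+1}$ on $[S_j,S_{j+1}]$ with $H_1=\sqrt{1-\Phi}$, $H_2=\sqrt{\Phi}$ universal; the chain rule for an affine inner map is exact, so the entire $j$-dependence is the factor $|m_j|^n=2^n(S_j-S_{j-1})^{-n}$ (and $2^n(S_{j+1}-S_j)^{-n}\le 2^n(S_j-S_{j-1})^{-n}$ on the second piece, by the standing monotonicity assumption on the scale sequence). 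This buys a one-line derivation of the lemma, makes Proposition \ref{bound1} superfluous for this purpose, and isolates all the analytic difficulty in a single $j$-independent statement, namely $\sup_\tau |H_k^{(n)}(\tau)|<\infty$. That statement is the one point you do not prove from scratch: the smoothness of $\sqrt{\Phi}$ at $\tau=-1$ and of $\sqrt{1-\Phi}$ at $\tau=1$ is not automatic for square roots of nonnegative smooth functions and genuinely requires the exponential flatness of $\phi$ to dominate the negative powers $\Phi^{-\nu+1/2}$ arising in Fa\`a di Bruno — which is exactly the estimate the paper performs (in $j$-dependent form) in its version of the proof. Since you correctly identify this as the crux and it is the same computation, deferred rather than omitted, the argument stands; spelling out the bound $|H_k^{(n)}|\le C(n)$ via the $e^{-1/(1-\tau^2)}$ domination would make it fully self-contained.
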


\begin{proof}
	We study
	$b_j(u)=\sqrt{a_{j+1}(u)-a_j(u)}$ in the interval $u \in [S_{j-1},S_{j+1}]$. Recalling (\ref{eq:bjexplicit}),

we focus first on $[S_{j-1}, S_j]$.	Again, Fa\`a di Bruno's formula implies
	\begin{equation*}
	D^{(n)}b_j(u)= n! \sum_{\nu=1}^{n} \frac{D^{(\nu)} \sqrt{1-a_j(u)}}{\nu!} \sum_{h_1+\dots+h_\nu=n} \frac{D^{h_1}(a_j(u))}{h_1!}\dots  \frac{D^{h_\nu}(a_j(u))}{h_\nu!}
		\end{equation*}
From Proposition \ref{bound1} it follows that
\[
|D^{(n)}b_j(u)|\leq
\]
\[
n! \sum_{\nu=1}^{n} \left|\frac{D^{(\nu)} \sqrt{1-a_j(u)}}{\nu!} \right|\sum_{h_1+\dots+h_\nu=n}\frac{C(h_1)}{h_1!} \left(\frac{2}{S_{j}-S_{j-1}}\right)^{h_1} \left(\frac{1}{1-\tau_j(u)^2}\right)^{2h_1} \dots \times
\]
\[
\times \frac{ C(h_\nu)}{h_\nu!} \left(\frac{2}{S_{j}-S_{j-1}}\right)^{h_\nu} \left(\frac{2}{1-\tau_j(u)^2}\right)^{2h_\nu}  \left(e^{-\frac{2}{1-\tau_j(u)^2}}\right)^\nu
\]
\[
\leq   n! C(n) \left(\frac{2}{S_{j}-S_{j-1}}\right)^{n}\sum_{\nu=1}^{n} \binom{n+\nu-1}{n}\left|\frac{D^{(\nu)} \sqrt{1-a_j(u)}}{\nu!} \right|\left( e^{-\frac{1}{1-\tau_j(u)^2}}\right)^\nu \left(\frac{1}{1-\tau_j(u)^2}\right)^{2n}
\]
\[
= n! C(n) \left(\frac{2}{S_{j}-S_{j-1}}\right)^{n}\sum_{\nu=1}^{n} \binom{n+\nu-1}{n} \left|\frac{1}{\nu!} \frac{1}{(1-a_j(u))^{\nu-1/2}}\right|\left( e^{-\frac{1}{1-\tau_j(u)^2}}\right)^\nu \left(\frac{1}{1-\tau_j(u)^2}\right)^{2n}.
\]
Now we have that
\[	
\left| \frac{e^{-\frac{1}{1-\tau_j(u)^2}}}{1-e^{-\frac{1}{1-\tau_j(u)^2}}}  \right| \leq \frac{1}{e-1} \text{ , } \left| \frac{e^{-\frac{1}{1-\tau_j(u)^2}}}{(1-\tau_j(u)^2)^{2n}}  \right| \leq \frac{k(n)}{e^{2n}}.
\]
Proceeding similarly in $[S_j, S_{j+1}]$, the thesis follows.
\end{proof}

\bigskip

Claudio Durastanti

Department S.B.A.I., Sapienza University of Rome

claudio.durastanti@uniroma.it

\bigskip

Domenico Marinucci

Department of Mathematics, University of Rome Tor Vergata

marinucc@mat.uniroma2.it

\bigskip

Anna Paola Todino

Department of Mathematical Sciences, Politecnico di Torino

anna.todino@polito.it

\end{document}